\newtheorem{teor}{Theorem}[section]
\newtheorem{prop}[teor]{Proposition}
\theoremstyle{definition}
\newtheorem{defi}[teor]{Definition}
\newtheorem{eje}[teor]{Example}
\newtheorem{nota}[teor]{Remark}
\numberwithin{equation}{section}
\newcommand{\R}{\mathbb{R}}
\newcommand{\W}{\Omega}
\newcommand{\w}{\omega}
\newcommand{\dd}{\textsf{d}}
\newcommand{\wit}{\widetilde}
\newcommand{\lsm}{\left[\!\begin{smallmatrix}}
\newcommand{\rsm}{\end{smallmatrix}\!\right]}
\newcommand{\nbd}{\nobreakdash}
\newcommand{\des}{\displaystyle}
\DeclareMathOperator{\Int}{Int} 
 \DeclareMathOperator{\spa}{span}
\begin{document}
\title[Persistence of almost periodic Nicholson systems]
{Is uniform persistence a robust property in almost periodic models? A well-behaved family: almost periodic Nicholson systems}
\author[R. Obaya]{Rafael Obaya}
\author[A.M. Sanz]{Ana M. Sanz}
\address[R. Obaya]{Departamento de Matem\'{a}tica
Aplicada, E. Ingenier\'{\i}as Industriales, Universidad de Valladolid,
47011 Valladolid, Spain, and member of IMUVA, Instituto de Investigaci\'{o}n en
Matem\'{a}ticas, Universidad de Valladolid.}
 \email{rafoba@wmatem.eis.uva.es}
\address[A.M. Sanz]{Departamento de Did\'{a}ctica de las Ciencias Experimentales, Sociales y de la Matem\'{a}tica,
Facultad de Educaci\'{o}n, Universidad de Valladolid, 34004 Palencia, Spain,
and member of IMUVA, Instituto de Investigaci\'{o}n en  Mate\-m\'{a}\-ti\-cas, Universidad de
Valladolid.} \email{anasan@wmatem.eis.uva.es}
\thanks{The authors were partly supported by Ministerio de Econom\'{\i}a y Competitividad
under project MTM2015-66330, and the European Commission under project H2020-MSCA-ITN-2014}
\date{}
\begin{abstract}
Using techniques of non-autonomous dynamical systems, we completely characterize the persistence properties of an almost periodic Nicholson system in terms of some numerically computable exponents. Although similar results hold  for a class of cooperative and sublinear models, in the general non-autonomous setting one has to consider persistence as a collective property of the family of systems over the hull: the reason is that uniform persistence is not a robust property in models given by almost periodic differential equations.
\end{abstract}
\keywords{Non-autonomous dynamical systems, almost periodic Nicholson models, uniform and strict persistence, mathematical biology}
\renewcommand{\subjclassname}{\textup{2010} Mathematics Subject Classification}
\maketitle
\section{Introduction}\label{sec-intro}
In the field of non-autonomous differential equations with a certain recurrent variation in time (such as almost periodicity) it is a common approach  to consider a system not just as a system on its own, but as a member of a whole family of systems: the one obtained through the so-called {\em hull construction\/} (for instance, see Johnson~\cite{john}, or Section~\ref{sec-preli}). The reason it that then the theory of non-autonomous dynamical systems or skew-product semiflows applies. In this context the study of the robustness of some dynamical properties has been a common issue. For instance, for linear differential systems the existence  of an exponential dichotomy is a robust property, meaning that if a given system has an exponential dichotomy, then the family of systems over the hull also has an exponential dichotomy (see Sacker and Sell~\cite{sase} in the finite-dimensional case). In this paper we focus on the property of uniform persistence. The question whether persistence is robust in autonomous equations has long been considered (for instance see Hofbauer and Schreiber~\cite{hosc}).
\par
Persistence is a dynamical property which has a great interest in mathematical modelling, in areas such as  biological population dynamics, epidemiology, ecology or neural networks. In the field of dynamical systems, different notions of persistence have been introduced, with the general meaning that in the long run the trajectories of the system place themselves above a prescribed region of the phase space. In many applications this region is determined by the null solution, so that, roughly speaking, uniform persistence means that solutions eventually become uniformly strongly positive.
\par
This paper is heavily motivated by the recent papers by Novo et al.~\cite{noos7} and Obaya and Sanz~\cite{obsa}, where the authors determine sufficient conditions for the uniform and strict persistence,  respectively, of families of non-autonomous cooperative systems of ordinary differential equations  (ODEs for short) and delay functional differential equations (FDEs for short) over a minimal base flow. The concepts of persistence are given as a collective property of the whole family. In the linear case the conditions given are not only sufficient but also necessary. Moreover, as a nontrivial application of the results,  in~\cite{obsa} a spectral characterization of the persistence properties of families of almost periodic Nicholson systems has been given. We refer the reader unfamiliar with Nicholson systems to Section~\ref{sec-persistence} and we just cite some very recent related works such as  Berezansky et al.~\cite{bebrid}, Liu and Meng~\cite{lime}, Faria~\cite{faria},  Wang~\cite{wang}, Faria and R\"{o}st~\cite{faro} and Faria et al.~\cite{faos}.
\par
At this point it is natural to wonder whether a characterization of the persistence properties, similar to that in~\cite{obsa}, can be given when one considers not a whole family, but only an individual almost periodic  Nicholson system.  In other words, if we can decide on the persistence properties of a given Nicholson system in terms of some computable items coming out of the system. In this case, the answer is in the affirmative, as it is stated in Theorem~\ref{teor Nicholson system}. But, as the reader might expect, the answer comes after a nice transfer of the property of persistence from the individual system to the whole family of systems over the  hull, to which the characterization given in~\cite{obsa} applies, so that it only remains to check that the exponents involved can in fact be computed just out of the individual system. This is clearly the most desirable situation, but, is it the general situation?
\par
The last question demands to solve the underlying problem on the robustness of uniform persistence. We show that this dynamical property is not robust in almost periodic ODEs or delay FDEs, meaning that in general it is not transferred from an individual almost periodic system to the family of systems over the hull. Besides, when the transfer fails to happen in models given by almost periodic cooperative and linear or sublinear ODEs or delay FDEs, the set of systems which do not gain the property is big, both from a topological and from a measure theory point of view. This means that in the non-robust situation it is highly improbable that we can experimentally or  numerically detect uniform persistence.
\par
The previous fact naturally raises a discussion on the proper definition of uniform persistence in the non-autonomous field, which in the general case results in the convenience of adopting the collective formulation given in~\cite{noos7}.
Notwithstanding, still uniform persistence is a robust property in some other models of real life processes, as those given by cooperative and sublinear ODEs or delay FDEs with some strongly positive bounded solution. In this case the consideration of an individual system is enough in what refers to the study of its persistence properties, and we  can characterize persistence through a set of numerically computable objects.
\par
To emphasize the importance of the latter fact from the point of view of applications, recall that there is a long  tradition in the study of monotone and sublinear, concave or convex
semiflows generated by families of differential equations, clearly motivated by their frequent appearence in mathematical modelling, apart from the theoretical interest itself.
 The works by Shen and Yi~\cite{shyi5,shyi},
Zhao~\cite{zhaox,zhaox2}, Mierczy{\'n}ski and Shen~\cite{mish}, Novo et al.~\cite{nooslondon} and N\'{u}\~{n}ez et al.~\cite{nuos2,nuos3,nuos4}
contain some significative examples of the application of
dynamical arguments to analyze non-autonomous differential equations modelling processes  in engineering, biology and ecology, among other
branches of science.
\par
We finally briefly describe the organization and main results of the paper. Section~\ref{sec-preli} contains some necessary preliminaries in order to make the paper reasonably self-contained. In particular, the definitions of skew-product semiflow and of a continuous separation for linear monotone skew-product semiflows are included, as they are important tools in our results.
\par
Section \ref{sec-persistence} is devoted to the persistence properties of an almost periodic Nicholson system. We prove that a persistence  property of a particular Nicholson system is transferred to the family of systems over the hull. As a consequence, taking advantage of the results in~\cite{obsa} for such a family, we characterize the persistence properties of an almost periodic Nicholson system by means of some numerically computable exponents. The same results hold for other models considered in the literature, as the one for hematopoiesis given in Mackey and Glass~\cite{magl}.
\par
Finally, in Section \ref{sec-ejemplo}, being aware of the dynamical complexity that  scalar almost periodic linear differential equations can exhibit (see Poincar\'{e}~\cite{poin} or Johnson~\cite{john}) in contrast with the cases of autonomous or periodic equations,  we offer a concrete example of an almost periodic linear scalar equation for which uniform persistence is not a robust property, that is, it is not transferred to the family of systems over the hull. This phenomenon can also appear in  higher dimensional linear and nonlinear systems. Despite this fact, still we can determine a wide class of almost periodic cooperative and sublinear ODEs and delay FDEs systems for which things go nicely in what refers to persistence, i.e., as in the Nicholson's case. The additional condition needed is the existence of a strongly positive bounded solution.
\section{Some preliminaries}\label{sec-preli}
In this section we include some preliminaries of
topological dynamics for non-autonomous dynamical systems, as well as some classes of almost periodic systems of ODEs and finite-delay FDEs which will be considered.
\par
Let $(\W,d)$ be a compact
metric space. A real {\em continuous flow\/} $(\W,\sigma,\R)$ is
defined by a continuous map $\sigma: \R\times \W \to  \W,\;
(t,\w)\mapsto \sigma(t,\w)$ satisfying
\begin{enumerate}
\renewcommand{\labelenumi}{(\roman{enumi})}
\item $\sigma_0=\text{Id},$
\item $\sigma_{t+s}=\sigma_t\circ\sigma_s$ for each $s$, $t\in\R$,
\end{enumerate}
where $\sigma_t(\w)=\sigma(t,\w)$ for all $\w \in \W$ and $t\in \R$.
The set $\{ \sigma_t(\w) \mid t\in\R\}$ is called the {\em orbit\/}
of the point $\w$. We say that a subset $\W_1\subset \W$ is {\em
$\sigma$-invariant\/} if $\sigma_t(\W_1)=\W_1$ for every $t\in\R$. A
subset $\W_1\subset \W$ is called {\em minimal \/} if it is compact,
$\sigma$-invariant and it does not contain properly any other
compact $\sigma$-invariant set. Based on Zorn's lemma, every compact
and $\sigma$-invariant set contains a minimal subset. Furthermore, a
compact $\sigma$-invariant subset is minimal if and only if every
orbit is dense. We say that the continuous flow $(\W,\sigma,\R)$ is
{\em recurrent\/} or {\em minimal\/} if $\W$ is minimal.
The flow $(\W,\sigma,\R)$ is {\em almost periodic\/} if the family of maps $\{\sigma_t\}_{t\in \R}:\W\to\W$ is uniformly equicontinuous  on $\W$, that is, for every
$\varepsilon
> 0 $ there is a $\delta >0$ such that, if $\w_1$, $\w_2\in\W$
with $d(\w_1,\w_2)<\delta$, then
$d(\sigma(t,\w_1),\sigma(t,\w_2))<\varepsilon$ for every $t\in \R$.
\par
A finite regular measure defined on the Borel sets of $\W$ is called
a Borel measure on $\W$. Given $\mu$ a normalized Borel measure on
$\W$, it is {\em $\sigma$-invariant\/} (or {\em invariant under\/}
$\sigma$) if $\mu(\sigma_t(\W_1))=\mu(\W_1)$ for every Borel subset
$\W_1\subset \W$ and every $t\in \R$. It is {\em ergodic\/}  if, in
addition, $\mu(\W_1)=0$ or $\mu(\W_1)=1$ for every
$\sigma$-invariant subset $\W_1\subset \W$.
We  denote by $\mathcal{M}_{\text{inv}}(\W,\sigma,\R)$ the set of
all positive and normalized $\sigma$-invariant measures on $\W$. The
Krylov\nbd-Bogoliubov theorem asserts that
$\mathcal{M}_{\text{inv}}(\W,\sigma,\R)$ is nonempty when $\W$ is a
compact metric space. The extremal points of the convex and weakly
compact set $\mathcal{M}_{\text{inv}}(\W,\sigma,\R)$ are the ergodic
measures, and thus the set of ergodic
measures $\mathcal{M}_{\text{erg}}(\W,\sigma,\R)$ is nonempty. We
say that $(\W,\sigma,\R)$ is {\em uniquely ergodic\/} if it has a
unique normalized invariant measure, which is then necessarily
ergodic. A minimal and almost periodic flow $(\W,\sigma,\R)$ is uniquely ergodic.
\par
Let $\R_+=\{t\in\R\,|\,t\geq 0\}$. Given a continuous compact flow $(\W,\sigma,\R)$ and a
complete metric space $(X,\dd)$, a continuous {\em skew-product semiflow\/} $(\W\times
X,\tau,\,\R_+)$ on the product space $\W\times X$ is determined by a continuous map
\begin{equation}\label{skew}
 \begin{array}{cccl}
 \tau \colon  &\R_+\times\W\times X& \longrightarrow & \W\times X \\
& (t,\w,x) & \mapsto &(\w{\cdot}t,u(t,\w,x))
\end{array}
\end{equation}
 which preserves the flow on $\W$, denoted by $\w{\cdot}t=\sigma(t,\w)$ and referred to as the {\em base flow\/}.
 The semiflow property means that
\begin{enumerate}
\renewcommand{\labelenumi}{(\roman{enumi})}
\item $\tau_0=\text{Id},$
\item $\tau_{t+s}=\tau_t \circ \tau_s\;$ for all  $\; t$, $s\geq 0\,,$
\end{enumerate}
where again $\tau_t(\w,x)=\tau(t,\w,x)$ for each $(\w,x) \in \W\times X$ and $t\in \R_+$.
This leads to the so-called semicocycle property,
\begin{equation*}
 u(t+s,\w,x)=u(t,\w{\cdot}s,u(s,\w,x))\quad\mbox{for $s,t\ge 0$ and $(\w,x)\in \W\times X$}\,.
\end{equation*}
\par
The set $\{ \tau(t,\w,x)\mid t\geq 0\}$ is the {\em semiorbit\/} of
the point $(\w,x)$. A subset  $K$ of $\W\times X$ is {\em positively
invariant\/}, or  $\tau$-{\em invariant\/}, if $\tau_t(K)\subseteq K$
for all $t\geq 0$.  A compact $\tau$-invariant set $K$ for the
semiflow  is {\em minimal\/} if it does not contain any nonempty
compact $\tau$-invariant set  other than itself. The restricted
semiflow over a compact and $\tau$-invariant set $K$ admits a {\em
flow extension\/} if there exists a continuous flow
$(K,\wit\tau,\R)$ such that $\wit \tau(t,\w,x)=\tau(t,\w,x)$ for all
$(\w,x)\in K$ and $t\in\R_+$.
\par
Whenever a semiorbit $\{\tau(t,\w_0,x_0)\mid t\ge 0\}$ is relatively
compact, one can consider the {\em omega-limit set\/} of
$(\w_0,x_0)$, denoted by $\mathcal{O}(\w_0,x_0)$ and formed by the
limit points of the semiorbit as $t\to\infty$, that is, the pairs
$(\w,x)=\lim_{n\to\infty} \tau(t_n,\w_0,x_0)$ for some sequence
$t_n\uparrow \infty$. The set $\mathcal{O}(\w_0,x_0)$ is then a
nonempty compact connected and $\tau$-invariant set.
\par
The reader can find in  Ellis~\cite{elli}, Sacker and
Sell~\cite{sase}, Shen and Yi~\cite{shyi} and references therein, a
more in-depth survey on topological dynamics.
\par
In this paper we will sometimes work under differentiability assumptions.
When $X$ is a Banach space, the
semiflow~\eqref{skew} is said to be of class $\mathcal C^1$ when $u$
is assumed to be of class $\mathcal C^1$ in $x$, meaning that
$u_x(t,\w,x)$ exists for any $t>0$ and any $(\w,x)\in\W\times X$ and
for each fixed $t>0$, the map $(\w,x)\mapsto u_x(t,\w,x)\in \mathcal
L(X)$ is continuous in a neighborhood of any compact set $K\subset
\W\times X$; moreover, for any $z\in X$, $\lim_{\,t\to
0^+}u_x(t,\w,x)\,z=z $ uniformly for $(\w,x)$ in compact sets of
$\W\times X$.
\par
In that case, whenever $K\subset \W\times X$ is a compact positively
invariant set, we can define a continuous  linear skew-product
semiflow called the {\em linearized skew-product semiflow\/}
of~\eqref{skew} over $K$,
\begin{equation*}
 \begin{array}{cccl}
 L: & \R_+\times K \times X& \longrightarrow & K \times X\\
& (t,(\w,x),z) & \mapsto &(\tau(t,\w,x),u_x(t,\w,x)\,z)\,.
\end{array}
\end{equation*}
We note that $u_x$ satisfies the linear semicocycle property
\begin{equation*}
u_x(t+s,\w,x)=u_x(t,\tau(s,\w,x))\,u_x(s,\w,x)\,,\quad
s,t\in\R_+\,,\;\, (\w,x)\in K.
\end{equation*}
\par
We now introduce Lyapunov exponents. For $(\w, x)\in K$ we
denote by  $\lambda(\w,x)$ the {\em Lyapunov exponent}\/  defined as
\begin{equation*}
\lambda(\w,x)=\limsup_{\,t\to\infty}
\frac{\log\|u_x(t,\w,x)\|}{t}\,.
\end{equation*}
The number $\lambda_K=\sup_{\,(\w,x)\in K} \lambda(\w,x)$ is called
the {\em upper Lyapunov exponent\/} of~K.
\par
Also, reference  will be made to  monotone, and to monotone and concave or monotone and sublinear  skew-product semiflows. When the state space $X$ is a strongly ordered Banach space, that is, there is a closed convex solid cone of nonnegative vectors $X_+$ with a nonempty interior, then, a (partial) {\em strong order relation\/} on $X$ is
defined by
\begin{equation*}
\begin{split}
 x\le y \quad &\Longleftrightarrow \quad y-x\in X_+\,;\\
 x< y  \quad &\Longleftrightarrow \quad y-x\in X_+\;\text{ and }\;x\ne y\,;
\\  x\ll y \quad &\Longleftrightarrow \quad y-x\in \Int X_+\,.\qquad\quad\quad~
\end{split}
\end{equation*}
The positive cone is usually assumed to be {\em normal} (see Amann~\cite{aman} for more details). In this situation, the skew-product semiflow~\eqref{skew}
is {\em monotone\/} if
\begin{equation*}
 u(t,\w,x)\le u(t,\w,y)\,\quad \text{for\, $t\ge 0$, $\w\in\W$ \,and\,
 $x,y\in X$ \,with\, $x\le y$}\,.
\end{equation*}
\par
A monotone skew-product semiflow
is said to be {\em concave\/} if for any $t\ge 0$, $\w\in\W$, $x\leq y$ and
$\lambda\in[0,1]$,
\begin{equation*}
 u(t,\w,\lambda\,y+(1-\lambda)\,x)\ge \lambda
 \,u(t,\w,y)+(1-\lambda)\,u(t,\w,x)\,,
\end{equation*}
and a skew-product semiflow with the
positivity property (that is, $\W\times X_+$ is $\tau$-invariant) is
{\em sublinear\/} if
\[
u(t,\w,\lambda\,x)\ge \lambda\,u(t,\w,x)\,\quad \mbox{for any}\;\,
t\ge 0\,,\; \w\in\W\,,\; x\in X_+\; \hbox{and } \lambda\in[0,1]\,.
\]
\par
The dynamical description of monotone and sublinear and of monotone and concave skew-product semiflows found respectively in N\'{u}\~{n}ez et al.~\cite{nuos2} and~\cite{nuos4} will be useful in this work.
\par
We now include the definitions of a continuous separation in the classical terms of Pol\'{a}\v{c}ik and Tere\v{s}\v{c}\'{a}k~\cite{pote} in the discrete case, generalized by Shen and Yi~\cite{shyi} to the continuous case, and of a continuous separation of type II in the terms introduced by Novo et al.~\cite{noos6}.
A continuous linear and monotone skew-product semiflow over a minimal base flow $(\W,{\cdot},\R)$ and a strongly ordered Banach space $X$,
\begin{equation*}
 \begin{array}{cccl}
  L: &   \R_+\times \W \times X& \longrightarrow & \W \times X\\
&(t,\w,v) & \mapsto &(\w{\cdot}t,\Phi(t,\w)\,v)\,,
\end{array}
\end{equation*}
which satisfies that for each $t>0$ the map $\W\to \mathcal L(X)$,
$\w\mapsto \Phi(t,\w)$ is continuous,
 is said to admit a {\em continuous
separation\/} if there are families of subspaces
$\{X_1(\w)\}_{\w\in \W}$ and $\{X_2(\w)\}_{\w\in
\W}\subset X$ satisfying the following properties:
\begin{itemize}
\item[(S1)] $X=X_1(\w)\oplus X_2(\w)$  and $X_1(\w)$, $X_2(\w)$ vary
    continuously in $\W$;
 \item[(S2)] $X_1(\w)=\spa\{ v(\w)\}$, with $v(\w)\gg 0$ and
     $\|v(\w)\|=1$ for any $\w\in \W$;
\item[(S3)] $X_2(\w)\cap X_+=\{0\}$ for any $\w\in \W$;
\item[(S4)] for any $t>0$,  $\w\in \W$,
\begin{align*}
\Phi(t,\w)X_1(\w)&= X_1(\w{\cdot}t)\,,\\
\Phi(t,\w)X_2(\w)&\subset X_2(\w{\cdot}t)\,;
\end{align*}
\item[(S5)] there are $M>0$, $\delta>0$ such that for any $\w\in \W$, $z\in
    X_2(\w)$ with $\|z\|=1$ and $t>0$,
\begin{equation*}
\|\Phi(t,\w)\,z\|\leq M \,e^{-\delta t}\|\Phi(t,\w)\,v(\w)\|\,.
\end{equation*}
\end{itemize}
When property (S3) does not hold,  but still it is replaced by (S3)' below, then the continuous separation is said to be of type II.
\begin{itemize}
\item[(S3)'] there exists a $T>0$ such that if for some $\w\in
\W$ there is a
    $z\in X_2(\w)$ with $z>0$, then $\Phi(t,\w)\,z=0$ for any $t\geq T$.
\end{itemize}
\par
To finish this section, we include a general class of almost periodic ODEs and delay FDEs whose solutions can be immersed into a skew-product semiflow by using the so-called hull construction. In order to build the hull, admissibility is the key property. A function $f\in C(\R\times\R^m,\R^n)$ is said to be {\em admissible\/} if for any
compact set $K\subset \R^m$, $f$ is bounded and uniformly continuous
on $\R\times K$. Recall that a continuous function $f:\R\to\R$ is {\em almost periodic\/} if for any $\varepsilon>0$ the $\varepsilon$-translate set of $f$, $T_\varepsilon(f)=\{r\in \R\mid |f(t+r)-f(t)|< \varepsilon\;\,\text{for any}\; t\in \R\}$ is a relatively dense set in $\R$, that is, there exists an $l>0$ such that any interval of length $l$ has a nonempty intersection with the set $T_\varepsilon(f)$.
\par
We consider  $n$-dimensional systems of ODEs given by a uniformly almost periodic function
$f\colon \R\times\R^n\to\R^n$ (that is, $f$ is admissible and  $f(t,y)$ is almost periodic in $t$ for any $y\in\R^n$), of class $\mathcal C^1$ with respect to $y$ and such that its first order derivatives $\partial
f/\partial y_i$, $i=1,\ldots,n$ are admissible,
\begin{equation}\label{ode}
y'(t)=f(t,y(t))\,,\quad t\in\R\,;
\end{equation}
and $n$-dimensional systems of finite-delay differential equations with
a fixed delay, which we take to be $1$, given by a uniformly almost periodic function
$f:\R\times\R^n\times \R^n\to\R^n$, with the same regularity and
admissibility conditions as before,
\begin{equation}\label{delay}
y'(t)=f(t,y(t),y(t-1))\,, \quad t>0\,.
\end{equation}
\par
In both of the previous situations, let $\W$ be the {\em hull\/} of $f$, that is, the closure for the topology of uniform convergence on compacta of
the set of $t$-translates of $f$, $\{ f_t \mid t\in\R\}$ with $f_t(s,z)=f(t+s,z)$
for $s\in \R$ and $z\in\R^n$ or $\R^{2n}$, adequate to
each case. The translation map $\R\times \W\to \W$,
$(t,\w)\mapsto\w{\cdot}t$ given by $\w{\cdot}t(s,z)= \w(s+t,z)$ ($s\in \R$ and $z\in\R^n$ or $\R^{2n}$) defines a
continuous flow $\sigma$ on the compact metric space $\W$, which is minimal and almost periodic, and thus uniquely ergodic.
Each function $\w\in\W$ has the
same regularity and admissibility properties as those of $f$, and
$F\colon \W\times \R^p\to\R^n$, $(\w,z)\mapsto \w(0,z)$ (with $p=n$ or $p=2n$) can be looked at as the unique continuous
extension of $f$ to its hull. Thus, in each case we can consider the
family of $n$-dimensional systems over the hull, which we write for
short as:
\begin{equation}\label{odefamily}
 y'(t)=F(\w{\cdot}t,y(t))\,,\quad\w\in\W
\end{equation}
for the ODEs case; and
\begin{equation}\label{delayfamily}
y'(t)=F(\w{\cdot}t,y(t),y(t-1))\,,\quad \w\in\W
\end{equation}
in the delay case, whose solutions  induce a forward
dynamical system of skew-product type~\eqref{skew} (in principle only locally-defined) on the product $\W\times X$. Namely, in the ODEs case we take $X=\R^n$ endowed with the norm
$\|x\|=|x_1|+\cdots+|x_n|$ for $x\in \R^n$, with the normal positive cone $\R^n_+=\{y\in \R^n\mid y_i\geq 0 \;\text{for}\;i=1,\ldots,n\}$ which induces a (partial) strong ordering on $\R^n$ defined componentwise, and  $u(t,\w,x)$ is the value of the solution of system~\eqref{odefamily} for $\w$ at time $t$  with initial condition $x\in X$.   In the delay case we take $X=C([-1,0],\R^n)$ with the norm
$\|\varphi\|=\|\varphi_1\|_\infty+\ldots+\|\varphi_n\|_\infty$ for $\varphi\in X$, and the positive cone $X_+=\{\varphi \in X \mid \varphi(s)\geq 0 \,\text{ for all }\, s\in[-1,0]\}$
which is normal and has nonempty interior $\Int X_+=\{\varphi\in X \mid
\varphi(s)\gg 0 \,\text{ for all }\, s\in[-1,0]\}$. In this case $u(t,\w,x)=y_t(\w,x)$, which is defined as $y_t(\w,x)(s)=y(t+s,\w,x)$ for $s\in [-1,0]$ for the solution $y(t,\w,x)$ of system~\eqref{delayfamily} for $\w$ at time $t$ with initial condition $x\in X$. In both cases, a bounded solution gives rise to a relatively compact semiorbit, so that the omega-limit set is well-defined. (For the standard theory of delay FDEs see Hale and
Verduyn Lunel~\cite{have}.)
\par
In order that the skew-product semiflow be monotone, $f$ is required to be {\em cooperative\/}. Under the former regularity assumptions, the cooperative condition for system~\eqref{ode} is written as
\[
 \frac{\partial f_i}{\partial y_j}(t,y) \geq 0 \;\, \text{ for } i\not= j\,,\; \text{ for any }  (t,y)\in\R\times\R^n\,;
\]
and for system~\eqref{delay} it is written as
\[
\frac{\partial f_i}{\partial y_j}(t,y,w) \geq 0 \;\,
 \text{ for } i\not= j \;\, \text{ and }\;  \frac{\partial f_i}{\partial w_j}(t,y,w)\ge 0\,\;\text{ for any}\;\, i, j\,,
\]
for any
$(t,y,w)\in\R\times\R^n\times\R^n$. If the initial system is cooperative, then so are all the systems over the hull.
By standard arguments of comparison of solutions (for instance, see Smith~\cite{smit}), this condition implies that the induced semiflow is monotone (on its domain of definition).
\par
Finally,  system~\eqref{ode} (resp.~system~\eqref{delay}) is (order) {\em concave\/} if
\[
f(t,\lambda\, y+(1-\lambda)\,x)\ge \lambda\,
f(t,y)+(1-\lambda)\,f(t,x)\]
for any $t\in\R$, $\lambda\in[0,1]$
and $x,y\in\R^{n}$ (resp. $x,y\in\R^{2n}$) with $x\le y$; and it is {\em sublinear\/} if
\[
f(t,\lambda\, y)\ge \lambda\,
f(t,y)\]
 for any $t\in\R$, $\lambda\in[0,1]$
and $y\in\R^{n}_+$ (resp.~$y\in\R^{2n}_+$). Note that, under the assumption that $0$ is a solution, the concave condition actually implies the sublinear condition. Once more, by standard arguments of comparison of solutions (see Smith~\cite{smit}), if the system is cooperative and concave\//\/sublinear, the induced semiflow is monotone and concave\//\/sublinear (on its domain of definition).
\par
\section{Persistence properties of almost periodic Nicholson systems}\label{sec-persistence}
For the reader unfamiliar with Nicholson systems, the most remarkable facts are the following. In 1954 Nicholson~\cite{nich} published experimental data on the behaviour of the population of the Australian sheep-blowfly. Then, Gurney et al.~\cite{gubl} studied the scalar delay equation
\begin{equation*}
x'(t)=-\mu\,x(t)+p\,x(t-\tau)\,e^{-\gamma\,x(t-\tau)}\,,
\end{equation*}
which was called the Nicholson's blowflies equation, as it suited the experimental data reasonably well. Here, $\mu,\,p,\,\gamma$ and $\tau$ are positive constants with a biological interpretation. In particular the delay $\tau$ stands for the maturation time of the species.  The interest of Nicholson himself was in the existence of oscillatory solutions for the behaviour of the adult population. Later on, many authors have determined different relations of the coefficients so as to have global asymptotic stability of the nontrivial positive steady state solution, though the general problem is still not closed (see Smith~\cite{smit} and  Berezansky et al.~\cite{bebrid}). Concerned with stability, persistence or existence of certain kind of solutions, among other dynamical issues, some generalizations and modifications of the Nicholson equation have also been considered.
\par
More recently, Nicholson systems have been introduced, as they fit models for one single species in an environment with a patchy structure or for multiple biological species. Taking time-dependent coefficients and adding a patch-structure helps to model the seasonal variation of the environment as well as the presence of a heterogeneous environment, so that there are $n$ patches in which the individuals can live, each of them determined by different climate, different food resources, and so on. In this way,
 the distribution of the population is influenced by the growth and death rates of the populations in each patch and migrations among patches. Also the maturation time is assumed to be possibly different in each patch.
\par
In this section we consider an almost periodic noncooperative system with delay which is among the family of Nicholson systems. Namely, we consider an $n$-dimensional system of delay FDEs with a patch-structure ($n$ patches) and a nonlinear term of Nicholson type, which is able to reflect an almost periodic temporal variation in the environment,
\begin{equation}\label{nicholson delay}
y_i'(t)=-\wit d_i(t)\,y_i(t) +\des \sum_{j=1}^n \wit a_{ij}(t)\,y_j(t) + \wit\beta_{i}(t)\,y_i(t-\tau_{i})\,e^{-\wit c_i(t)\,y_i(t-\tau_{i})}\,,\quad t\geq 0\,,
\end{equation}
for $i=1,\ldots,n$. Here $y_i(t)$ denotes the density of the population in patch $i$ at time $t\geq 0$, and
$\tau_i>0$ is the maturation time in that patch. We consider the delay system together with an initial condition, which is given by a map $\varphi=(\varphi_1,\ldots, \varphi_n)\in C([-\tau_1,0])\times \ldots \times C([-\tau_n,0])$, which is assumed to be nonnegative in all components, due to the implicit biological meaning. Let us denote by $y(t,\varphi)$ the solution of this problem, whenever defined.
\par
We make the following assumptions on the coefficient functions:
\begin{itemize}
\item[(a1)] $\wit d_i(t)$, $\wit a_{ij}(t)$, $\wit c_i(t)$,  and $\wit \beta_{i}(t)$ are almost periodic maps on $\R$;
\item[(a2)] $\wit d_i(t)\geq d_0>0$ for any $t\in\R$,  for any $i$;
\item[(a3)] $\wit a_{ij}(t)$ are all nonnegative maps and $\wit a_{ii}$ is taken to be identically null;
\item[(a4)] $\wit\beta_{i}(t)>0$ for any $t\geq 0$, for any $i$;
\item[(a5)] $\wit c_{i}(t)\geq c_0>0$ for any $t\geq 0$, for any $i$;
\item[(a6)]   $\wit d_i(t)-\sum_{j=1}^n \wit a_{ji}(t)>0$  for any $t\geq 0$, for any $i$.
\end{itemize}
\par
To get a biological meaning of the imposed conditions, the coefficient $\wit a_{ij}(t)$ stands for the migration rate of the population moving from patch $j$ to patch
$i$ at time $t\geq 0$. As for the birth function in each patch, it is given by the delay nonlinear Nicholson term.   Finally, the decreasing rate in patch $i$, given by $\wit d_i(t)$, includes the mortality rate as well as the migrations coming out of patch $i$, so that condition (a6) makes sense, saying that the mortality rate is positive at any time.
\par
From an analytical point of view, condition (a5) is imposed so as to guarantee the uniform boundedness of the terms $y\,e^{-\wit c_i(t)\,y}$ for $y\geq 0$ and $t\in \R$, and condition (a6) is a weak column dominance condition for the matrix of coefficients of the ODEs linear part of system~\eqref{nicholson delay}. This last condition is enough, in this almost periodic setting, to deduce that the null solution of the ODEs linear system is globally exponentially stable. 
Therefore, a direct application of the
variation of constants formula permits to check that system~\eqref{nicholson delay} is dissipative or, in other words, solutions are ultimately bounded
(see Faria et al.~\cite{faos} for more details), and in particular they are defined for all $t\geq 0$.
\par
Note also that the Nicholson system~\eqref{nicholson delay} does not satisfy the {\em quasimonotone condition\/} given in Smith~\cite{smit}, here just called cooperative condition for simplicity (see Section~\ref{sec-preli}), but still solutions starting with a nonnegative initial map, remain nonnegative forever, just by applying the invariance criterion given in Theorem~5.2.1 in~\cite{smit}. Alternatively, one can note that $y_i'(t)\geq -\wit d_i(t)\,y_i(t)$ for $i=1,\ldots,n$, so that by a standard comparison of solutions argument (once more, see~\cite{smit}), we can affirm that $\varphi\geq 0$ implies $y(t,\varphi)\geq 0$ for any $t\geq 0$ and besides, $\varphi\geq 0$ with $\varphi(0)\gg 0$ implies $y(t,\varphi)\gg 0$ for any $t\geq 0$.
\par
Considering the previous properties of the solutions of the population model~\eqref{nicholson delay}, at least two natural approaches to the concept of persistence arise. On the one hand, if at time $t=0$ there are some individuals in every patch, one wonders whether the population will eventually  persist in all the patches, namely, whether  in the long run  the population will overpass a positive lower bound in all patches. On the other hand, if at time $t=0$ there are some individuals at least in one patch, one wants to know whether the population will persist in some patch (possibly a different one). We refer to these situations as  uniform persistence at $0$ and strict persistence at $0$, respectively, and the precise definitions are the following.
\begin{defi}\label{defi persistence Nicholson}
(i) The Nicholson system~\eqref{nicholson delay} is {\em uniformly persistent at $0$} ({\it $u_0$-persistent} for short) if there exists an $m>0$ such that for any initial map $\varphi\geq 0$ with  $\varphi(0)\gg 0$ there exists a time $t_0=t_0(\varphi)$ such that
     \[ y_i(t,\varphi)\geq m \quad \text{for any }\;t\geq t_0 \;\text{ and any }\; i=1,\ldots,n \,.\]
     \par
(ii) The Nicholson system~\eqref{nicholson delay} is {\em strictly persistent at $0$} ({\it $s_0$-persistent} for short) if there exists an $m>0$ such that for any initial map $\varphi\geq 0$ with  $\varphi(0)> 0$ there exists a time $t_0=t_0(\varphi)$ such that at least for one component $i$,
     \[ y_i(t,\varphi)\geq m \quad \text{for any }\;t\geq t_0\,.\]
\end{defi}
Note that both definitions agree with the concept of uniform (strong) $\rho$-per\-sis\-ten\-ce in the terms of Smith and Thieme~\cite{smth} for an adequate choice of the map $\rho:X\to\R_+$ (see also Faria and R\"{o}st~\cite{faro} and Faria et al.~\cite{faos}). Our main purpose is to have a characterization of these two properties in terms of some computable objects related to the system.
\par
Recently, in \cite{obsa} the authors have characterized the properties of uniform persistence and strict persistence at $0$ for families of almost periodic Nicholson systems. If we want to take advantage of their approach, the first thing that we have to do is to include the initial non-autonomous system~\eqref{nicholson delay} into the family of systems over the hull of the vector-valued map determined by all the almost periodic coefficients. Note that we need coefficients of~\eqref{nicholson delay} to be  defined on  $\R$ to easily build the hull $\W$ of the system, and define the continuous translation flow $\R\times\W\to\W$, just denoted by $(t,\w)\mapsto\w{\cdot}t$. Then, for each $\w\in\W$ the corresponding system in the family can be written as
\begin{equation}\label{nicholson delay hull}
y'_i(t)=- d_i(\w{\cdot}t)\,y_i(t) +\des \sum_{j=1}^n  a_{ij}(\w{\cdot}t)\,y_j(t) + \beta_{i}(\w{\cdot}t)\,y_i(t-\tau_{i})\,e^{-c_i(\w{\cdot}t)\,y_i(t-\tau_{i})}\,,
\end{equation}
$i=1,\ldots,n$, for certain continuous nonnegative maps $d_i,\,a_{ij},\,\beta_{i},\,c_i$ defined on $\W$.
\par
We take $X=C([-\tau_1,0])\times \ldots \times C([-\tau_n,0])$ with the usual cone of positive elements, denoted by $X_+$, and the sup-norm.  Then, solutions $y(t,\w,\varphi)$ of systems~\eqref{nicholson delay hull} for $\w\in\W$ with initial values $\varphi\in X_+$ induce a globally defined (see Theorem~\ref{teor obayasanz} (i)) skew-product semiflow~\eqref{skew},  $\R_+\times \W\times X_+\to \W\times X_+$, $(t,\w,\varphi)\mapsto (\w{\cdot}t,y_t(\w,\varphi))$, with the usual notation in delay equations, $y_t(\w,\varphi)_i(s)=y_i(t+s,\w,\varphi)$ for any $s\in [-\tau_i,0]$, for each $i=1,\ldots,n$.  The fact that the set $\W\times X_+$ is invariant for the dynamics follows once more from the criterion given in Theorem~5.2.1 in~\cite{smit}. Besides, this semiflow has a trivial minimal set $K=\W\times \{0\}$, as the null map is a solution of any of the systems over the hull.
\par
In this situation, the properties of uniform persistence and strict persistence at $0$ for the family of systems~\eqref{nicholson delay hull} have the following collective formulation,  directly adapted from  Definitions~3.1 and~5.2 in~\cite{obsa}, respectively.
\begin{defi}\label{defi persistence hull}
(i) The  family of Nicholson systems~\eqref{nicholson delay hull} is {\em uniformly persistent} ({\it u-persistent} for short) if there exists a map $\psi\gg 0$ such that for any $\w\in \W$ and any initial map $\varphi\gg 0$  there exists a time $t_0=t_0(\w,\varphi)$ such that
     $y_t(\w,\varphi)\geq \psi$ for any $t\geq t_0$.\par
(ii) The  family of Nicholson  systems~\eqref{nicholson delay hull} is {\em strictly persistent at $0$} ({\it $s_0$-persistent} for short) if there exists a collection of maps $e_1,\ldots,e_p\in X$, with $e_k>0$ for $k=1,\ldots,p$, such that for any $\w\in \W$ and any initial map $\varphi\geq 0$ with  $\varphi(0)> 0$ there exists a time $t_0=t_0(\w,\varphi)$ such that
$y_t(\w,\varphi)\geq  e_k$ for any $t\geq t_0$, for some $k\in \{1,\ldots,p\}$.
\end{defi}
More precisely, Section 6 in~\cite{obsa} is devoted to the study of these persistence properties for the family of almost periodic Nicholson systems~\eqref{nicholson delay hull}, where the coefficients $\wit c_i(t)$ in the initial system~\eqref{nicholson delay}  have been taken to be identically equal to $1$ just for simplicity.  It is straightforward to check that, under hypothesis (a5), all the results in Section~6 in~\cite{obsa} still apply. For the sake of completeness we include here the following result, whose items are respectively Theorem~6.1 and Theorem~6.2 in~\cite{obsa}.
\begin{teor}\label{teor obayasanz}
Let us consider the Nicholson system~\eqref{nicholson delay} under assumptions  {\rm (a1)-(a6)}. Then:
\begin{itemize}
\item[(i)] Solutions of the family~\eqref{nicholson delay hull}  with initial condition in $X_+$ are ultimately bounded, in the sense that there exists a constant $r>0$ such that for any $\w\in \W$ and any $\varphi\in X_+$,  any component of the vectorial solution satisfies $0\leq y_i(t,\w,\varphi)\leq r$ from some time on. In particular the induced semiflow is globally defined on $\W\times X_+$.
\item[(ii)] The  family of Nicholson systems~\eqref{nicholson delay hull} is uniformly persistent (resp. strictly persistent at $0$) if and only if the linearized family of systems along the null solution, which is given by
\begin{equation}\label{nicholson delay lineal}
z_i'(t)=-d_i(\w{\cdot}t)\,z_i(t) +\des \sum_{j=1}^n  a_{ij}(\w{\cdot}t)\,z_j(t) + \beta_{i}(\w{\cdot}t)\,z_i(t-\tau_{i})\,,
\end{equation}
for $i=1,\ldots,n$, for each $\w\in \W$, is uniformly persistent (resp.~strictly persistent at $0$) in the sense of Definition {\rm\ref{defi persistence hull}}.
\end{itemize}
 \end{teor}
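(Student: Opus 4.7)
The plan is to work throughout in the skew-product setting over the hull $\W$, and to reduce both claims to the structural features of the Nicholson nonlinearity $g_c(u)=u\,e^{-cu}$ on $\R_+$, together with the general theory of sublinear monotone skew-product semiflows and the collective persistence results of~\cite{obsa}.

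For part~(i), I would first extract uniform a priori estimates on $\W\times X_+$. Hypothesis~(a5) gives $0\le g_{c_i(\w)}(u)\le 1/(e\,c_0)$ for every $\w\in\W$ and $u\ge 0$, and the continuity of the $\beta_i$ on the compact hull makes the nonlinear terms in~\eqref{nicholson delay hull} uniformly bounded by some $M>0$. The key input is then the column dominance condition~(a6), extended by continuity to $\W$: there exists a uniform $\delta>0$ with $d_i(\w)-\sum_{j}a_{ji}(\w)\ge \delta$ for every $i$ and $\w\in\W$. Differentiating $V(t)=\sum_i y_i(t,\w,\varphi)$ along a solution in $X_+$ and reorganizing via $\sum_i\sum_j a_{ij}\,y_j=\sum_j\bigl(\sum_i a_{ij}\bigr)y_j$ yields $V'(t)\le -\delta\,V(t)+nM$, and Gronwall provides the uniform ultimate bound; in particular, solutions exist globally on $\W\times X_+$.

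For part~(ii), the structural facts about $g_c$ that I would use are $0\le g_c(u)\le u$ and $g_c(\lambda u)\ge \lambda\,g_c(u)$ for $u\ge 0$, $c\ge 0$, $\lambda\in[0,1]$. The implication \emph{nonlinear persistent $\Rightarrow$ linear persistent} is the easy direction: the linearized family~\eqref{nicholson delay lineal} is cooperative in the delay sense, and a standard comparison argument together with $g_c(u)\le u$ shows $y(t,\w,\varphi)\le z(t,\w,\varphi)$ componentwise whenever both flows start at the same $\varphi\in X_+$; any lower bound in the sense of Definition~\ref{defi persistence hull} for the nonlinear family thus transfers to the linear family.

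The converse is the crux and the main obstacle. Here I would combine three ingredients: the dissipativity from part~(i), giving a compact global attractor on $\W\times X_+$; the sublinearity and monotonicity of the nonlinear family on $\W\times X_+$ (invariance of $\W\times X_+$ follows from the invariance criterion of Smith~\cite{smit}, and the order preservation then comes from $g_c(u)\le u$ combined with the cooperative structure of the remaining terms, so that the full machinery of monotone sublinear skew-product semiflows from~\cite{nuos2} applies); and a continuous separation of type~II for the linearized cooperative family~\eqref{nicholson delay lineal}, whose principal Lyapunov exponent $\lambda_\W$ governs the persistence alternative through the results of~\cite{obsa}. The hard step is ruling out trajectories of the nonlinear family that could asymptotically shadow the zero section: it is handled by the local estimate $g_c(u)\ge u-c\,u^2$ near $0$, which lets one compare the nonlinear semiflow with its linearization up to a controlled quadratic error, turning the uniform exponential instability encoded in $\lambda_\W>0$ into an escape estimate from $\W\times\{0\}$. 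Once trajectories are bounded away from the zero section, the monotone sublinear machinery and the compactness of the attractor yield the uniform lower bound $\psi$ (resp.\ the collection $e_1,\dots,e_p$) required by Definition~\ref{defi persistence hull}.
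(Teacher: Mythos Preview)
Your part~(i) has a gap: assumption~(a6) asserts only pointwise strict positivity of $d_i(t)-\sum_j a_{ji}(t)$ for $t\ge 0$, and for almost periodic coefficients this does \emph{not} in general extend to a uniform bound $\delta>0$ over the whole hull~$\W$; the extension by continuity yields only $\ge 0$. The paper (and~\cite{faos}) circumvents this by first deducing global exponential stability of the linear ODE part from the weak dominance and almost periodicity, and then invoking the variation of constants formula to get ultimate boundedness. Your direct Gronwall route needs the uniform $\delta$ you do not have.

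The decisive error is in part~(ii), converse direction. You assert that the Nicholson semiflow itself is monotone, writing that ``the order preservation then comes from $g_c(u)\le u$ combined with the cooperative structure of the remaining terms''. This is false: $g_c'(u)=(1-cu)e^{-cu}<0$ for $u>1/c$, so the delay coupling is not order-preserving and the Nicholson system is \emph{not} cooperative --- the paper states this explicitly just before Definition~\ref{defi persistence Nicholson}. Hence the monotone sublinear machinery of~\cite{nuos2} cannot be applied to the Nicholson family as you propose, and your argument breaks down precisely where you need it. (The inequality $g_c(u)\le u$ gives sublinearity and the comparison $y\le z$ for the easy direction, but it says nothing about monotonicity.)

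The remedy, visible in the proof of Theorem~\ref{teor Nicholson pers} and in the source~\cite{obsa} from which the present statement is quoted, is to replace the Nicholson nonlinearity by a truncated version: set $h(u)=u\,e^{-c_0u}$ for $u\le 1/c_0$ and $h(u)=(e\,c_0)^{-1}$ for $u\ge 1/c_0$. This $h$ is nondecreasing, bounded and concave, so the resulting auxiliary family \emph{is} cooperative and concave; it dominates the Nicholson family from above and has the \emph{same} linearization~\eqref{nicholson delay lineal} at~$0$. One runs the monotone--concave theory of~\cite{nuos4} on the auxiliary family to connect its persistence to that of the linearized family, and then transfers the conclusion back to the Nicholson family by comparison. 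Your proposal is missing exactly this auxiliary comparison system.
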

The importance of the first approximation result to check persistence stated in (ii)  lies on the fact that Nicholson systems are not cooperative, as for cooperative systems the result for uniform persistence has already been proved in~\cite{noos7}. Note that the linearized systems along the null solution~\eqref{nicholson delay lineal} are   independent of the coefficients $c_i(\w)$ and they  are cooperative thanks to conditions (a3) and (a4), so that the spectral characterization of uniform persistence and strict persistence at $0$ given in~\cite{obsa} for general cooperative delay linear families directly applies to them. The precise spectral characterization of the persistence properties for the almost periodic Nicholson family~\eqref{nicholson delay hull}  is stated in Theorem~6.3 in~\cite{obsa}.
\par
At this point it is natural to pose some questions:
\begin{itemize}
\item[(Q1)] What is the relation between the definitions of persistence for the initial system~\eqref{nicholson delay} given in Definition~\ref{defi persistence Nicholson}, and the definitions stated in Definition~\ref{defi persistence hull} in a collective way for the family of systems~\eqref{nicholson delay hull}?
\item[(Q2)] Can we give a precise characterization of the persistence properties of system~\eqref{nicholson delay} in terms of some computable items of the system?
\end{itemize}
\par
The purpose of this section is to give an answer to these two questions. In short, we are going to see that things go smoothly for the almost periodic Nicholson systems. We will also determine another class of systems for which things go exactly as in the Nicholson systems. However, as it will be shown in the next section, the transfer of the property of persistence from one particular non-autonomous almost periodic system to the family of systems over the hull, as it occurs in the Nicholson systems, is not to be expected in general: it fails even for one-dimensional linear almost periodic equations.  This supports the convenience of  considering a collective formulation of the properties of persistence on the family of systems over the hull, as otherwise, the property may hold for some systems of the family and may not hold for others, which is not desirable in applications to real world models.
\begin{teor}\label{teor Nicholson pers}
Let us consider the almost periodic Nicholson system~\eqref{nicholson delay} under assumptions {\rm (a1)-(a6)} and the family of systems over the hull~\eqref{nicholson delay hull}. Then,
system~\eqref{nicholson delay} is uniformly persistent at $0$ (resp.~strictly  persistent at $0$)  if and only if the family of systems~\eqref{nicholson delay hull} is uniformly persistent (resp.~strictly  persistent at $0$).
\end{teor}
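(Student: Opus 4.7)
The plan is to prove each equivalence by a separate easy and hard direction, running the $u_0/u$ and $s_0/s_0$ cases in parallel. Denote by $\w_0\in\W$ the hull element corresponding to the original system~\eqref{nicholson delay}.

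For the easy direction (family $\Rightarrow$ individual), I would specialize the family property at $\w_0$ and handle the mismatch in the hypothesis on the initial map (the family definition demands $\varphi\gg 0$, the individual one only $\varphi\geq 0$ with $\varphi(0)\gg 0$) through the positivity of the Nicholson semiflow. Given such a $\varphi$, the positivity statement recalled in Section~\ref{sec-persistence} yields $y(s,\w_0,\varphi)\gg 0$ for every $s\ge 0$; with $T:=\max_i\tau_i$, the segment $y_T(\w_0,\varphi)$ lies in $\Int X_+$, so the family's $u$-persistence applied at $(\w_0{\cdot}T,y_T(\w_0,\varphi))$ combined with the semicocycle property gives $y_t(\w_0,\varphi)\geq\psi$ eventually, and $m:=\min_{i,s}\psi_i(s)>0$ yields the required threshold. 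The $s_0$ case is analogous, using $\varphi\geq 0$ with $\varphi(0)>0$, the corresponding positivity statement, and the finite collection of witnesses $e_1,\dots,e_p$ (each witness forces some coordinate-slice to be bounded below by a strictly positive number, and taking the minimum over the finitely many indices gives a uniform $m$).

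For the hard direction (individual $\Rightarrow$ family) I plan a two-step reduction. Step one transfers individual persistence from the Nicholson system to the linearization~\eqref{nicholson delay lineal} at $\w_0$: from $\beta_i(\w)\,y\,e^{-c_i(\w)y}\le\beta_i(\w)\,y$ for $y\ge 0$ and the cooperativity of~\eqref{nicholson delay lineal}, a standard comparison argument (cf.~\cite{smit}) gives $y(t,\w_0,\varphi)\le z(t,\w_0,\varphi)$ componentwise for $\varphi\ge 0$, so the lower bound on the Nicholson orbit is inherited by the linear orbit. Step two transfers individual persistence of the linear system at $\w_0$ to the full linear family on $\W$, and then Theorem~\ref{teor obayasanz}(ii) transports the result back to~\eqref{nicholson delay hull}.

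Step two is the crux and the main obstacle. The family~\eqref{nicholson delay lineal} is cooperative, monotone and positively homogeneous over the uniquely ergodic minimal base $\W$, and by~\cite{obsa} it admits a continuous separation (of type~I for the $u$-case, of type~II for the $s_0$-case). The persistence properties of such a family are characterized in~\cite[Thm.~6.3]{obsa} by sign conditions on a set of principal Lyapunov exponents attached to the invariant subbundles; unique ergodicity forces each such exponent to be constant on $\W$, hence computable from the individual system alone. I would argue that individual $u_0$ (resp.~$s_0$) persistence of the linear system at $\w_0$, applied to the constant initial map $\mathbf{1}$ (resp.~to an adequate collection of canonical directions), precludes the relevant exponents from being strictly negative, matching the spectral condition that characterizes the family property. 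The delicate point is converting a lower bound on a single orbit at $\w_0$ into a statement about asymptotic spectral behavior on the whole hull: the argument has to blend minimality of $\W$ (propagating the bound via sequences $\w_0{\cdot}t_n\to\w$ and a continuous-dependence limit), the continuous separation (reducing attention to the principal direction), and unique ergodicity (so the principal exponent is a genuine constant on $\W$), all three tools being available from~\cite{obsa}.
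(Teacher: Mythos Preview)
Your easy direction is fine and matches the paper. The hard direction, however, has a genuine gap at the point you yourself flag as ``the crux'': from individual persistence of the linearized system at~$\w_0$ you can only conclude that the relevant principal exponents satisfy $\lambda_j\ge 0$, not $\lambda_j>0$. But the spectral characterization in~\cite[Thm.~6.3]{obsa} requires \emph{strict} positivity, so ``precludes the relevant exponents from being strictly negative'' does \emph{not} match the condition you need. This is not a technicality that minimality or unique ergodicity can repair: Example~\ref{ejemplo} in Section~\ref{sec-ejemplo} exhibits a scalar linear almost periodic equation that is individually uniformly persistent with $\lambda=0$ (zero mean value, unbounded integral), while the family over the hull is not uniformly persistent. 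So the implication ``linear persistent at $\w_0$ $\Rightarrow$ linear family persistent'' is false in general, and your Step~two collapses.

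What the paper exploits, and your route discards, is the \emph{boundedness} of the Nicholson nonlinearity. Instead of passing upward to the (unbounded) linear system via $y\le z$, the paper builds an auxiliary cooperative, \emph{concave} and \emph{dissipative} family~\eqref{comparar} (replacing $y\,e^{-c_i y}$ by a monotone bounded concave $h$) and compares $y\le z$ with $z$ now the auxiliary solution. Dissipativity yields a strongly positive minimal set for this auxiliary semiflow, and the monotone--concave structure theorem of~\cite{nuos4} forces the good alternative (case~A1), because the $u_0$-persistence bound $\bar m$ rules out the degenerate alternative~A2. This gives u-persistence of the auxiliary family, hence of its linearization along~$0$ --- which coincides with~\eqref{nicholson delay lineal} --- and Theorem~\ref{teor obayasanz}(ii) closes the loop. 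For the $s_0$-case the paper first reduces to irreducible diagonal blocks and shows, via an irreducibility bootstrap, that $s_0$-persistence actually upgrades to $u_0$-persistence on each relevant block, feeding back into the argument above. Your outline does not supply any mechanism to exclude $\lambda_j=0$, and without the nonlinear (dissipative, concave) detour there is none.
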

\begin{proof}
In both cases, the easy implication is the one transferring the persistence property from the family to the initial system. In the case of u-persistence, for the map $\psi\gg 0$ given in Definition~\ref{defi persistence hull} (i), define $m=\min\{\psi_1(0),\ldots,\psi_n(0)\}>0$. Now, in order to check the $u_0$-persistence of the initial system, recall that it is one of the systems in the family over the hull: let it be the system for $\w_0\in \W$, and let us keep this notation throughout  the whole proof. Now, fixed any initial map $\varphi\geq 0$ with $\varphi(0)\gg 0$, note that the solution $y(t,\varphi)=y(t,\w_0,\varphi)\gg 0$ for any $t\geq 0$, so that for $\tau_0=\max\{\tau_1,\ldots,\tau_n\}$ it holds that $y_{\tau_0}(\w_0,\varphi)\gg 0$. Therefore, by the u-persistence of the family, for $\w_0{\cdot}\tau_0$ and $y_{\tau_0}(\w_0,\varphi)\gg 0$ there exists a time $t_0=t_0(\w_0,\tau_0,\varphi)$ such that $y_t(\w_0{\cdot}\tau_0,y_{\tau_0}(\w_0,\varphi))\geq \psi$ for any $t\geq t_0$. By the cocycle property, this means that $y_{t+\tau_0}(\w_0,\varphi)\geq \psi$ for any $t\geq t_0$, and therefore, $y_i(t,\varphi)\geq m$ for any $t\geq t_0+\tau_0$ and any $i=1,\ldots,n$, and we are done.
\par
As for the case of $s_0$-persistence, for each of the maps $e_k>0$ given in Definition~\ref{defi persistence hull} (ii), there is at least one component $i=i(k)$ such that $(e_k)_i>0$, so that there exists at least a $s_k\in [-\tau_i,0]$ with $(e_k)_i(s_k)>0$. Now, define $m=\min\{(e_1)_{i(1)}(s_1),\ldots,(e_p)_{i(p)}(s_p)\}>0$. Then, given  $\varphi\geq 0$ with $\varphi(0)> 0$, by the $s_0$-persistence of the family there exists a time $t_0=t_0(\w_0,\varphi)$ such that $y_t(\w_0,\varphi)\geq e_k$ for $t\geq t_0$, for some $k\in\{1,\ldots,p\}$. In particular, for the component $i=i(k)$ previously defined,  $y_t(\w_0,\varphi)_{i}(s_k)=y_i(t+s_k,\varphi)\geq (e_k)_{i}(s_k)\geq m$ for any $t\geq t_0$, so that $y_i(t,\varphi)\geq m$ for any  $t\geq t_0$, as we wanted.
\par
For the converse implication in the case of u-persistence the arguments are more subtle, and we make use of the general theory of monotone and concave $\mathcal{C}^1$ skew-product semiflows developed by N\'{u}\~{n}ez et al.~in~\cite{nuos4}.  To begin with, taking condition (a5) into consideration we note that for any $y\geq 0$, $\w\in\W$ and $i=1,\ldots,n$, $y\,e^{-c_i(\w)\,y}\leq y\,e^{-c_0\,y}$. Then, we define the nondecreasing, bounded and concave map $h:[0,\infty)\to [0,\infty)$ of class $\mathcal C^1$,
\[
h(y)= \left\{ \begin{array}{ll}
y\,e^{-c_0\,y} &\text{if }\; y\in[0,1/c_0]\,,  \\
\frac{1}{c_0}\,e^{-1} &\text{if }\; y\in[1/c_0,\infty) \,,
\end{array} \right.
\]
we look at the family of cooperative and concave delay nonlinear systems given for each $\w\in \W$ by
\begin{equation}\label{comparar}
z'_i(t)=- d_i(\w{\cdot}t)\,z_i(t) +\des \sum_{j=1}^n  a_{ij}(\w{\cdot}t)\,z_j(t) + \beta_{i}(\w{\cdot}t)\,h(z_i(t-\tau_{i}))\,,
\end{equation}
for $i=1,\ldots,n$, where the coefficients are just those of~\eqref{nicholson delay hull}, and consider the induced skew-product semiflow $\bar \tau:\R^+\times\W\times X_+\to \W\times X_+$, $(t,\w,\varphi)\mapsto (\w{\cdot}t,z_t(\w,\varphi))$, where $z(t,\w,\varphi)$ is the solution of system~\eqref{comparar} with initial value $\varphi$. As the nonlinear terms are uniformly bounded, the same argument as that in the proof of Theorem~6.1 in~\cite{obsa} implies that solutions of~\eqref{comparar} are ultimately bounded and in particular $\bar\tau$ is globally defined. Besides, since the systems are cooperative and concave, the semiflow is monotone and concave, and it is also $\mathcal C^1$.
\par
Now, assuming that the property of $u_0$-persistence in Definition~\ref{defi persistence Nicholson} (i) holds for system~\eqref{nicholson delay}, take an initial map $\varphi_0\geq 0$ with $\varphi_0(0)\gg 0$ and take $t_0=t_0(\varphi_0)$ such that $y_i(t,\varphi_0)=y_i(t,\w_0,\varphi_0)\geq m$ for any $t\geq t_0$ and any $i=1,\ldots,n$. Then, as systems~\eqref{comparar} are cooperative, we can apply a standard argument of comparison of solutions to state that for $t\geq t_0$, $m\leq y_i(t,\w_0,\varphi_0)\leq z_i(t,\w_0,\varphi_0)$.
\par
At this point, we can build the omega-limit set $\mathcal{O}(\w_0,\varphi_0)$ of the pair $(\w_0,\varphi_0)$ for the semiflow $\bar \tau$, which contains a minimal set $K$ which necessarily lies on the zone $\W\times\{\varphi\in X_+\mid \varphi\geq \bar m\}$, for the map $\bar m\in X$ whose components are identically equal to $m$. In other words, there is a strongly positive minimal set for $\bar \tau$. Then, Theorem~3.8 in~\cite{nuos4} applied to the $\mathcal{C}^1$ monotone and concave skew-product semiflow $\bar \tau$ asserts that the dynamics suits one the following cases: the so-called case A1 when $K$ is the unique minimal set strongly above $0$, or case A2 when there are infinitely many minimal sets strongly above $0$.
\par
If we can discard case A2, we are done, as in case A1 the unique minimal set is a hyperbolic copy of the base, that is, $K=\{(\w,c(\w))\mid \w\in\W\}$ for certain continuous map $c:\W\to X_+$, which exponentially attracts  any trajectory starting inside the interior of the positive cone. Therefore, it is immediate that the semiflow $\bar \tau$ is  u-persistent in the interior of the positive cone according to Definition 3.1 in~\cite{obsa} or, in other words, the family~\eqref{comparar} is u-persistent. Now, for $\bar \tau$ regular monotone and concave, with $0$ being a trajectory, it holds that
\begin{equation}\label{mayorante lineal}
z_t(\w,\varphi) \leq D_\varphi z_t(\w,0)\,\varphi\,, \quad \text{for any}\;\w\in\W\,,\; \varphi\geq 0 \;\text{and}\; t\geq 0\,,
\end{equation}
and it is well-known that $z(t)=(D_\varphi z_t(\w,0)\,\varphi)(0)$ provide the solutions of the linearized family  along $0$ of the family~\eqref{comparar}, which by construction coincides with~\eqref{nicholson delay lineal},  the family of linearized Nicholson systems along $0$, which then turn out to be  u-persistent. In this case, to finish, we can apply Theorem~\ref{teor obayasanz} (ii)  to conclude the u-persistence for the Nicholson family~\eqref{nicholson delay hull}.
\par
Finally, we discard case A2. Argue for contradiction and assume that case A2 holds for $\bar\tau$. Then, according to the proof of Theorem~3.8 in~\cite{nuos4} we can consider the family of strongly positive minimal sets  $K_s=\mathcal{O}(\w_0,s\,\bar\varphi)$ for $s\in (0,1]$ for a fixed $\bar\varphi\gg 0$ with $(\w_0,\bar\varphi)\in K$, which  must satisfy property (vi) in the statement of case A2:
if $(\w,\psi)\in\W\times X_+$  is such that for any $s\in(0,1]$
there exists $(\w,\varphi_s)\in K_s$ with $\psi\le \varphi_s$, then $\psi\not\gg 0$. Nevertheless, by the  $u_0$-persistence, as done before, we have that  $K_s\subset \W\times\{\varphi\in X_+\mid \varphi\geq \bar m\}$
for any $s\in (0,1]$, and we get a contradiction just by taking $\psi =\bar m/2\gg 0$. We are finished.
\par
It remains to deal with the $s_0$-persistence property of the family, assuming the $s_0$-persistence property of the initial system. Once more this is quite delicate and the proof follows the line of ideas used in~\cite{obsa}, in what refers to a rearrangement of the family of systems in view of the linearized family. Recall here that a square matrix $A=[a_{ij}]$ is {\em reducible} if there is a simultaneous permutation of rows and columns that brings $A$ to the form
  $$ \left[\begin{array}{cc}
   A_{11}&0\\ A_{21}&A_{22}\end{array}\right],$$
  with $A_{11}$ and $A_{22}$   square matrices; and it is {\em irreducible} if it is not reducible. Equivalently, for $n>1$, $A$ is irreducible if for any nonempty proper subset
$I\subset\{1,\ldots,n\}$ there are $i\in I$ and $j\in
\{1,\ldots,n\}\setminus I$ such that $a_{ij}\not=0$.
\par
More precisely, as stated in Theorem~6.3 in~\cite{obsa}, for each $\w\in\W$ we can look at the linearized system along the null solution~\eqref{nicholson delay lineal} and assume without loss of generality that the constant matrix $\bar A=[\bar a_{ij}]$ defined as
\begin{equation}\label{a}
\bar a_{ij}=\sup_{\w\in\W} a_{ij}(\w)\;\text{ for }\,i\not= j\,,\quad \text{and }\, \bar a_{ii}=0\,
\end{equation}
has a block lower triangular structure
\begin{equation}\label{triangular}
\left[\begin{array}{cccc}
\bar A_{11} & 0  &\ldots & 0 \\
\bar A_{21} & \bar A_{22} & \ldots& 0 \\
\vdots & \vdots  &\ddots & \vdots \\
\bar A_{k1} & \bar A_{k2} & \ldots& \bar A_{kk}
\end{array}\right]\,,
\end{equation}
with irreducible diagonal blocks $\bar A_{jj}$ of dimension $n_j$ for $j=1,\ldots,k$ ($n_1+\cdots+n_k=n$).
To simplify the notation, we arrange the set of delays by blocks by denoting $\{\tau_1,\ldots,\tau_n\}=\{\tau^1_1,\ldots,\tau^1_{n_1},\ldots,\tau^k_1,\ldots,\tau^k_{n_k}\}$ and we write $X=X^{(1)}\times\ldots\times X^{(k)}$ for
\begin{equation}\label{xj}
X^{(j)}= C([-\tau^j_{1},0])\times \ldots \times C([-\tau^j_{n_j},0])\,,\quad j=1,\ldots,k\,.
\end{equation}
For each $j=1,\ldots,k$ let $L_j$ be the linear skew-product semiflow induced on the product space $\W\times X^{(j)}$ by the solutions of the $n_j$-dimensional delay linear systems corresponding to the $j\,$th diagonal block in~\eqref{nicholson delay lineal},
\begin{equation}\label{linearized j}
z_i'(t)=-d_i(\w{\cdot}t)\,z_i(t) +\des \sum_{l\in I_j} a_{il}(\w{\cdot}t)\,z_l(t) + \beta_{i}(\w{\cdot}t)\,z_i(t-\tau_{i})\,,\quad t\geq 0\,,
\end{equation}
for $i\in I_j$, for each $\w\in \W$, where $I_j$ is  the set formed by
the $n_j$ indexes corresponding to the rows of the block $\bar A_{jj}$. Then, $L_j$ admits a continuous separation (of type II) and its principal spectrum is just given by the upper Lyapunov exponent $\lambda_j$ of the minimal set $K^j=\W\times \{0\}\subset \W\times X^{(j)}$. Besides, Theorem~6.3 in~\cite{obsa} gives a precise characterization of the properties of u-persistence and $s_0$-persistence for the Nicholson family~\eqref{nicholson delay hull} in terms of the positivity of a certain set of these exponents $\lambda_j$ in each case. Now we distinguish two cases.
\par\smallskip
\noindent
\textbf{(C1)}: $k=1$, that is, the matrix $\bar A$ is irreducible. In this case, starting with a positive component of the solution, we are going to raise the other ones, so as to actually obtain $u_0$-persistence for system~\eqref{nicholson delay}. We remark that a similar argument has been used in the proof of Theorem~5.4 in~\cite{noos7}. More precisely, given $\varphi\geq 0$ with $\varphi(0)>0$ there exists a $t_0=t_0(\varphi)$ and there exists a component $i_1$ such that $y_{i_1}(t,\varphi)=y_{i_1}(t,\w_0,\varphi)\geq m$ for any $t\geq t_0$, for the constant $m>0$ given in Definition~\ref{defi persistence Nicholson} (ii). Now, as $\bar A$ is irreducible, there exists an index $i_2\in \{ 1,\ldots,n\}\setminus\{i_1\}$ such that $\bar a_{i_2i_1}>0$. As  $y_{i_2}'(t,\w_0,\varphi)\geq -d_{i_2}(\w_0{\cdot}t)\,y_{i_2}(t,\w_0,\varphi)+a_{i_2i_1}(\w_0{\cdot}t)\,m$ for $t\geq t_0$, we consider the scalar family of ODEs for $\w\in\W$,
\begin{equation}\label{escalar}
h'(t)= -d_{i_2}(\w{\cdot}t)\,h(t)+a_{i_2i_1}(\w{\cdot}t)\,m \,,
\end{equation}
written for short as $h'(t)=F(\w{\cdot}t,h(t))$, for which the null map is a lower solution because $F(\w,0)\geq 0$ for any $\w\in\W$. Besides, since $\bar a_{i_2i_1}>0$, there exists an $\w^*\in\W$ such that $F(\w^*,0)=a_{i_2i_1}(\w^*)\,m> 0$. In this situation $0$ is a strong sub-equilibrium (see Lemma~3.15 in~\cite{noob1}, which applies to ODEs). As a consequence, there exist $t_{i_2}>0$ and $m_{i_2}>0$ such that, if $h(t,\w,0)$ is the solution of~\eqref{escalar}
with initial value $0$, then $h(t,\w,0)>m_{i_2}$ for any $t\geq t_{i_2}$ and any $\w\in \W$. Therefore, a standard argument of comparison of solutions leads to the fact that, for any $t\geq t_0+t_{i_2}$,
\[
y_{i_2}(t,\w_0,\varphi)\geq h(t-t_0,\w_0{\cdot}t_0,y_{i_2}(t_0,\w_0,\varphi))\geq h(t-t_0,\w_0{\cdot}t_0,0)>m_{i_2}\,.
\]
\par
Now, if there are any more components, the process is just the same. We just give a sketch for the next step. By the irreducible character of $\bar A$, there exist indexes $i_3\in \{ 1,\ldots,n\}\setminus\{i_1,i_2\}$ and $i\in \{i_1,i_2\}$ such that $\bar a_{i_3i}>0$, and then note that $y_{i_3}'(t,\w_0,\varphi)\geq -d_{i_3}(\w_0{\cdot}t)\,y_{i_3}(t,\w_0,\varphi)+a_{i_3i}(\w_0{\cdot}t)\,m_i$ for $t\geq t_0+t_{i_2}$, for the constant $m_i$ given by  $m$ if $i=i_1$ and by $m_{i_2}$ if $i=i_2$. The same argument as before leads to the existence of some $t_{i_3}>0$ and $m_{i_3}>0$ such that $y_{i_3}(t,\w_0,\varphi)\geq m_{i_3}$ for any $t\geq t_0+t_{i_2}+t_{i_3}$. Iterating the process we finally obtain that, taking $m_0=\min\{m,m_{i_2},\ldots,m_{i_n}\}$,  $y_{i}(t,\w_0,\varphi)\geq m_0$ for any $t\geq t_0+t_{i_2}+\ldots+t_{i_n}$ and any $i=1,\ldots,n$.
\par
Note that the constant $m_0$ just defined depends on the component $i_1=i_1(\varphi)$ we started with. As there are just $n$ different components with which the process can start, depending on the initial map $\varphi$, and the process in each case exclusively depends on the irreducible structure of the constant matrix $\bar A$, we can conclude that system~\eqref{nicholson delay} is $u_0$-persistent. As we already know,  this property extends as u-persistence to the whole  family~\eqref{nicholson delay hull}, $\w\in\W$, and we can apply Theorem~6.3 in~\cite{obsa}, which in the case $k=1$ says that the upper Lyapunov exponent $\lambda>0$, and the family is also $s_0$-persistent. We are done with this case.
\par\smallskip
\noindent
\textbf{(C2)}: $k>1$, that is, the matrix $\bar A$ is reducible and it has the block lower triangular structure~\eqref{triangular}. In this case Theorem~6.3 in~\cite{obsa} asserts that the Nicholson family~\eqref{nicholson delay hull} is $s_0$-persistent if and only if $\lambda_j>0$ for any $j\in J$ for the set of indexes
\[
J=\{j\in\{1,\ldots,k\} \,\mid\, \bar A_{ij}=0 \text{ for any } i\not= j\}.
\]
Thus, we fix $j\in J$ and we consider the $n_j$-dimensional Nicholson-type system
\begin{equation}\label{nicholson j}
y_i'(t)=-\wit d_i(t)\,y_i(t) +\des \sum_{l\in I_j} \wit a_{il}(t)\,y_l(t) + \wit\beta_{i}(t)\,y_i(t-\tau_{i})\,e^{-\wit c_i(t)\,y_i(t-\tau_{i})}\,,
\end{equation}
for $i\in I_j$, which is included for $\w=\w_0$ in the family of systems for  $\w\in\W$,
\begin{equation*}
y_i'(t)=-d_i(\w{\cdot}t)\,y_i(t) +\des \sum_{l\in I_j}  a_{il}(\w{\cdot}t)\,y_l(t) + \beta_{i}(\w{\cdot}t)\,y_i(t-\tau_{i})\,e^{-c_i(\w{\cdot}t)\,y_i(t-\tau_{i})}\,,
\end{equation*}
for $i\in I_j$, with linearized family along $0$ given by~\eqref{linearized j} and associated constant matrix $\bar A_{jj}$, which is irreducible. If system~\eqref{nicholson j} is $s_0$-persistent, we can apply to it the result in case (C1) to get that the upper Lyapunov exponent $\lambda_j>0$.
\par
So, to finish, let us check that for each $j\in J$ system~\eqref{nicholson j} is $s_0$-persistent. For that, take $\bar\varphi^j\in X^{(j)}_+$ with $\bar\varphi^j(0)>0$, and build a map $\varphi\in X_+=X^{(1)}_+\times\ldots\times X^{(k)}_+$, $\varphi=(\varphi^1,\ldots,\varphi^k)$ such that $\varphi^j=\bar\varphi^j$ and $\varphi^i=0$ for $i\not=j$, which satisfies $\varphi\geq 0$ and $\varphi(0)> 0$. The $s_0$-persistence of the initial system~\eqref{nicholson delay} says that there exists a $t_0=t_0(\varphi)$ such that for some component $i_0$, $y_{i_0}(t,\varphi)\geq m$ for $t\geq t_0$. Now, by the structure of the system noting that $j\in J$, and the structure of the initial map $\varphi$, it is easy to check that, writing the solution by blocks $y(t,\varphi)=(y^1(t,\varphi),\ldots,y^k(t,\varphi))$, it is  $y^i(t,\varphi)=0$ for $i\not=j$, whereas $y^j(t,\varphi)$ coincides with the solution of system~\eqref{nicholson j} with initial condition $\varphi^j=\bar\varphi^j$. Therefore, necessarily $i_0\in I_j$ and we are done. The proof is finished.
\end{proof}
Once we have given a satisfactory answer to question (Q1), we now present an answer to question (Q2).
\begin{teor}\label{teor Nicholson system}
Let us consider the almost periodic Nicholson system~\eqref{nicholson delay} under assumptions {\rm (a1)-(a6)}, and let us assume without loss of generality that the constant matrix $\bar A=[\bar a_{ij}]$ defined as
\[
\bar a_{ij}=\sup_{t\in \R} \wit a_{ij}(t)\;\text{ for }\,i\not= j\,,\quad \text{and }\, \bar a_{ii}=0\,
\]
has a block lower triangular structure as in~\eqref{triangular}
with irreducible diagonal blocks $\bar A_{jj}$ of dimension $n_j$ for $j=1,\ldots,k$ $(n_1+\cdots+n_k=n)$. For each $j=1,\ldots,k$ let us consider the $n_j$-dimensional almost periodic linear delay system
\begin{equation}\label{nicholson lineal j}
z_i'(t)=-\wit d_i(t)\,z_i(t) +\des \sum_{l\in I_j} \wit a_{il}(t)\,z_l(t) + \wit\beta_{i}(t)\,z_i(t-\tau_{i})\,,\quad t\geq 0\,,
\end{equation}
for $i\in I_j$, the set of indexes corresponding to the rows of the block $\bar A_{jj}$, and let $z^j(t,\bar 1)$ be the solution with initial map $\bar 1$, the map with all components identically equal to $1$ in the space $X^{(j)}$ defined in~\eqref{xj}. Then, let $\wit\lambda_j$ be defined as
\begin{equation*}
\wit\lambda_j=\lim_{t\to \infty} \frac{\log\|z_t^j(\bar 1)\|}{t}\,.
\end{equation*}
\par
Finally, let us consider two sets of indexes associated to the structure of the linear part of the system: if $k=1$, i.e., if the matrix $\bar A$ is irreducible, let $I=J=\{1\}$; else, let
\begin{align*}
I&=\{j\in\{1,\ldots,k\} \,\mid\, \bar A_{ji}=0 \text{ for any } i\not= j\},\\
J&=\{j\in\{1,\ldots,k\} \,\mid\, \bar A_{ij}=0 \text{ for any } i\not= j\},
\end{align*}
that is, $I$ is composed by the indexes $j$ such that any off-diagonal block in the row of $\bar A_{jj}$ is null, whereas $J$ contains those indexes $j$ such that any off-diagonal block in the column of $\bar A_{jj}$ is null.
Then:
\begin{itemize}
 \item[(i)] The almost periodic Nicholson system~\eqref{nicholson delay} is uniformly persistent at $0$ if and only if $\wit\lambda_j>0$ for any $j\in I$.
  \item[(ii)] The almost periodic Nicholson system~\eqref{nicholson delay} is strictly persistent at $0$ if and only if $\wit\lambda_j>0$ for any $j\in J$.
 \end{itemize}
\end{teor}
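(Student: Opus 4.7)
\medskip
\noindent\textbf{Proof plan.} My plan is to chain three reductions already available in the excerpt and then reduce the entire statement to a single identification of exponents. First, Theorem~\ref{teor Nicholson pers} equates $u_0$-persistence (resp.\ $s_0$-persistence) of the individual system~\eqref{nicholson delay} with u-persistence (resp.\ $s_0$-persistence) of the hull family~\eqref{nicholson delay hull}. Next, Theorem~\ref{teor obayasanz}(ii) equates these in turn with the corresponding persistence properties of the linearized family~\eqref{nicholson delay lineal} along the null solution. Finally, Theorem~6.3 of~\cite{obsa}, recalled in the discussion preceding the statement, characterizes u-persistence (resp.\ $s_0$-persistence) of~\eqref{nicholson delay lineal} by the positivity of the upper Lyapunov exponents $\lambda_j$ of the minimal sets $K^j=\W\times\{0\}\subset\W\times X^{(j)}$ for every $j\in I$ (resp.\ $j\in J$). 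Composing these three equivalences, the theorem reduces to establishing the single identity $\wit\lambda_j=\lambda_j$ for each $j=1,\ldots,k$.

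The identification of exponents is where the main work concentrates, and it rests on the continuous separation of type II admitted by the linear skew-product semiflow $L_j$ induced by~\eqref{linearized j} over the uniquely ergodic minimal base $(\W,\sigma,\R)$. Its one-dimensional dominant bundle $X_1^j(\w)=\spa\{v_j(\w)\}$, with $v_j(\w)\gg 0$ and $\|v_j(\w)\|=1$, carries the upper Lyapunov exponent $\lambda_j$ of $K^j$ by property~(S5). Since $\bar 1\in X^{(j)}$ is strongly positive, comparability inside the strongly ordered Banach space $X^{(j)}$ supplies constants $\alpha,\beta>0$ with $\alpha\,v_j(\w_0)\le \bar 1\le \beta\,v_j(\w_0)$, where $\w_0\in\W$ is the element of the hull given by the initial coefficient tuple. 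Monotonicity of $L_j$ together with normality of the positive cone sandwich $\|z_t^j(\bar 1)\|$ between $\alpha\,\|L_j(t,\w_0)\,v_j(\w_0)\|$ and $\beta\,\|L_j(t,\w_0)\,v_j(\w_0)\|$, so that both quantities have the same exponential growth rate.

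To identify that common rate with $\lambda_j$, I would invoke property~(S4): it forces $L_j(t,\w_0)\,v_j(\w_0)=r(t,\w_0)\,v_j(\w_0{\cdot}t)$ for a positive scalar $r(t,\w_0)$ obeying the cocycle identity $r(t+s,\w_0)=r(t,\w_0{\cdot}s)\,r(s,\w_0)$. The almost periodicity of the base flow implies unique ergodicity, and the uniform Birkhoff theorem applied to the continuous additive cocycle $\log r$ then yields that $t^{-1}\log r(t,\w)$ converges, uniformly in $\w\in\W$, to a constant which, by the defining properties of the dominant bundle, equals $\lambda_j$. This simultaneously proves that the limit defining $\wit\lambda_j$ exists and that it coincides with $\lambda_j$, closing the argument.

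I expect the delicate step to be this last one: promoting the $\limsup$ defining the upper Lyapunov exponent to an actual limit along the single trajectory $z^j(t,\bar 1)$ started at $\w_0$, with the constant value $\lambda_j$ across all points of $\W$. The continuous separation of type II and the unique ergodicity of the almost periodic base flow are precisely the two ingredients that make this numerical characterization via one single trajectory of the initial system possible, and everything else in the statement is a bookkeeping of the reduction chain above through the block lower triangular structure of~$\bar A$.
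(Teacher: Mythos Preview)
Your proposal is correct and follows essentially the same route as the paper: reduce via Theorem~\ref{teor Nicholson pers} to the hull family, invoke the spectral characterization from Theorem~6.3 of~\cite{obsa} in terms of the exponents $\lambda_j$, and then identify $\wit\lambda_j=\lambda_j$ using the continuous separation of $L_j$ together with unique ergodicity of the almost periodic base. The only minor differences are cosmetic: the paper skips your intermediate appeal to Theorem~\ref{teor obayasanz}(ii) (since Theorem~6.3 in~\cite{obsa} already applies directly to the Nicholson family), it controls $\|z_t^j(\bar 1)\|$ via the operator-norm bound $\|\Phi_j(t,\w)\|\le l\,\|\Phi_j(t,\w)\,\varphi_0\|$ rather than your order sandwich $\alpha\,v_j(\w_0)\le\bar 1\le\beta\,v_j(\w_0)$, and it cites Theorem~7 and Remark~2 of Sacker--Sell~\cite{sase} for the one-dimensional invariant subbundle instead of naming the uniform Birkhoff theorem for the additive cocycle $\log r$; but these are equivalent packagings of the same mechanism.
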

\begin{proof}
First of all, recall that if the matrix $\bar A$ does not have the required structure, we just need to permute the variables in order to obtain it. Also, note that when the Nicholson system is included in the family of systems~\eqref{nicholson delay hull} over the hull  $\W$, the matrix $\bar A$ defined under the same name in~\eqref{a} coincides with the matrix $\bar A$ here defined, because of the hull construction.
\par
Now, as stated in Theorem~\ref{teor Nicholson pers}, the properties of $u_0$-persistence and $s_0$-persistence of system~\eqref{nicholson delay} are equivalent respectively to the properties of u-persistence and $s_0$-persistence for the family of systems~\eqref{nicholson delay hull}, and the last properties are completely characterized in Theorem~6.3 in~\cite{obsa}, which is a parallel result to the above one just given in terms of the upper Lyapunov exponents $\lambda_j$ of the trivial minimal set $K^j=\W\times\{0\}$ for the linear skew-product semiflow $L_j$ induced on $\W\times X^{(j)}$ (see~\eqref{xj}) by the solutions of the $n_j$-dimensional linear delay family~\eqref{linearized j}, for each $j=1,\ldots,k$.
\par
At this point, it remains to check that the number $\wit\lambda_j$ coincides with $\lambda_j$ for each $j=1,\ldots,k$. The thing is that, as already commented before, thanks to the irreducible character of the diagonal block $\bar A_{jj}$, the linear skew-product semiflow $L_j(t,\w,\varphi)=(\w{\cdot}t,\Phi_j(t,\w)\,\varphi)$ admits a continuous separation (of type II), which roughly speaking means that there is an invariant one-dimensional subbundle dominating the dynamics of $L_j$ in the long run.
More precisely, if $X^{(j)}=X_1^{(j)}(\w)\oplus X_2^{(j)}(\w)$ for $\w\in\W$ is the decomposition given by the continuous separation of $L_j$, with $X_1^{(j)}(\w)=\spa\{ v^j(\w)\}$, for a continuous map $v^j:\W\to X^{(j)}$ such that   $v^j(\w)\gg 0$ and $\|v^j(\w)\|=1$ for any $\w\in \W$, then $\Phi_j(t,\w)\,v^j(\w)=c_j(t,\w)\,v^j(\w{\cdot}t)$, and the positive coefficients $c_j(t,\w)$, which can be defined for all $t\in\R$ and $\w\in\W$, satisfy the linear cocycle property $c_j(t+s,\w)=c_j(t,\w{\cdot}s)\,c_j(s,\w)$
for any $t,s\in \R$ and any $\w\in \W$ (the reader is referred to~\cite{noos7} or~\cite{obsa} for more details). That is, the one-dimensional linear
skew-product flow given by the scalar cocycle $c_j(t,\w)$ can be seen as a flow extension of the restriction
of the linear semiflow $L_j$ to the leading one-dimensional
subbundle,  turning the problem into the setting of the spectral theory for one-dimensional linear skew-product flows, which has been studied in Sacker and Sell~\cite{sase}.
\par
Note also that the almost periodicity of the coefficients implies that the flow in $\W$ is uniquely ergodic, so that the Sacker-Sell spectrum of the previous one-dimensional linear skew-product flow (which is called  the principal spectrum by definition) reduces to a singleton, namely,  $\{\lambda_j\}$,  for  the upper Lyapunov exponent
$\lambda_j=\sup_{\w\in\W}\lambda_j(\w)$, where the Lyapunov exponent for each $\w\in\W$ is defined
as
\[
\lambda_j(\w)= \limsup_{t\to\infty} \frac{\log\|\Phi_j(t,\w)\|}{t}\,.
\]
\par
Now, Remark~2 in~\cite{sase} says that the result given in Theorem~7 for almost periodic linear ODEs does extend to the case of differentiable linear skew-product flows on vector bundles, provided that the base flow is minimal and uniquely ergodic. Therefore, we can apply Theorem~7 in~\cite{sase} to the one-dimensional invariant subbundle determined by the continuous separation, so that the upper Lyapunov exponent $\lambda_j$ can be calculated along the trajectories in the one-dimensional subbundle. More precisely, for all $\w\in\W$ there exists the limit
\[
\lambda_j=\lim_{t\to\infty} \frac{\log\|\Phi_j(t,\w)\,v^j(\w)\|}{t}= \lim_{t\to\infty} \frac{\log\|c_j(t,\w)\,v^j(\w{\cdot}t)\|}{t} =  \lim_{t\to\infty} \frac{\log c_j(t,\w)}{t}\,,
\]
and the last limit has been shown in~\cite{noos7} to give the value $\lambda_j(\w)$, so that the value of the upper Lyapunov exponent $\lambda_j$ is attained at any $\w\in\W$.
\par
In particular, for $\w_0$ giving the initial system~\eqref{nicholson delay}, one can calculate $\lambda_j=\lambda_j(\w_0)$.
To finish, it is well-known that for any fixed $\varphi_0\in X^{(j)}$ with $\varphi_0\gg 0$, the norm of the
differential operators $\|\Phi_j(t,\w)\|$ can be controlled by
$\|\Phi_j(t,\w)\,\varphi_0\|$, namely, there exists an $l=l(\varphi_0)>0$ such that
$\|\Phi_j(t,\w)\|\leq l\,\|\Phi_j(t,\w)\,\varphi_0\|$ for any $t>0$ and
$\w\in \W$. From here, taking  $\varphi_0 = \bar 1$ just for the sake of simplicity, and noting that $\Phi_j(t,\w_0)\,\bar 1 = z_t^j(\bar 1)$, we conclude that $\lambda_j=\wit\lambda_j$, as we wanted. The proof is finished.
\end{proof}
\begin{nota} The existence of a continuous separation for the linear semiflows $L_j$ is crucial to reduce the problem to the setting of $1$-dimensional dynamics. Also the fact that the flow on $\W$ is minimal and uniquely ergodic is crucial in two respects: first, to guarantee that the principal spectral intervals reduce to singletons, and second, to permit the calculus of the upper Lyapunov exponent as the Lyapunov exponent of any point $\w\in\W$.
\end{nota}
The advantage of the previous result is that, given a concrete Nicholson system, one can easily compute the matrix $\bar A$ and permute the variables so as to get the required block triangular structure. After that, to estimate the numbers $\wit\lambda_j$, one has to numerically solve the linear delay systems~\eqref{nicholson lineal j}, corresponding to each of the diagonal blocks in $\bar A$, starting with a strongly positive map, which in the statement has been taken to be $\bar 1$, but it might be any other. This can be done in many different ways. The reader is referred to Breda and Van Vleck~\cite{brva} for a general approach and to Calzada et al.~\cite{calz} for a recent approach in the quasi-periodic case, taking advantage of the presence of a continuous separation for the linear skew-product semiflows $L_j$ defined in the previous proof.
\par
To finish this section, we remark that for systems with a similar structure things regarding persistence go as in the almost periodic Nicholson systems.  For instance, the same results can be stated for useful almost periodic population models  which are written as
\begin{equation*}
y'_i(t)=- \wit d_i(t)\,y_i(t) +\des \sum_{j=1}^n \wit a_{ij}(t)\,y_j(t) + \wit\beta_{i}(t)\,h_i(y_i(t-\tau_{i}))\,,
\end{equation*}
for $i=1,\ldots,n$, with similar hypotheses on the linear part to the ones imposed in the Nicholson systems, and where the nonlinearities are of the form
\[
h_i(y)=\frac{y}{1+\wit c_i(t)\,y^\alpha}\quad (\alpha \geq 1)\,,\quad y\in \R_+.
\]
For instance, see the scalar model for the process of hematopoiesis for a population of mature circulating cells studied in Mackey and Glass~\cite{magl}.
\par
Here we collect some analytical features of all these systems which make things go nicely in what refers to questions (Q1) and (Q2).
\begin{enumerate}
\item The almost periodicity of the coefficients, which produces a minimal and uniquely ergodic hull.
\item The ODE linear part of the system is cooperative. It is also uniformly asymptotically stable and the nonlinearities are bounded, which makes the system dissipative.
\item The nonlinear terms $h_i(y)$ are, apart from bounded, sublinear maps, and they are increasing in a right neighborhood of $0$,  so that the induced skew-product semiflow is monotone and sublinear in a region of the phase space.
\item Thanks to (2) and (3), the persistence properties of  the family of systems over the hull can be studied through the linearized family along the null solution.
\end{enumerate}
Note that for $\alpha=1$, the map in the family of nonlinearities is just given by  $h_i(y)=\frac{y}{1+\wit c_i(t)\,y}$ which is always increasing and concave, so that the system is in this case dissipative, cooperative and concave, and Theorem~\ref{teor sublineal} in Section~\ref{sec-ejemplo} applies to it. This kind of nonlinearities have been used in epidemic models with positive feedback; for instance, see Capasso~\cite{capa} and Zhao~\cite{zhaox}.
\section{Uniform persistence in cooperative linear\//\/sublinear models: an individual or a collective property? }\label{sec-ejemplo}
In this section, we first provide a precise example in which the property of uniform persistence is not transferred from one particular non-autonomous almost periodic equation to the family of equations over the hull. In other words, we can affirm that uniform persistence is not a robust property in almost periodic equations. Note that neither is robust the property of strict persistence, since uniform and strict persistence are equivalent properties in the general case of linear monotone skew-product semiflows with a continuous separation of classical type (see~\cite{obsa}). Besides, in the cooperative linear or sublinear setting, if the property of uniform persistence is not inherited by the family, it happens in a strong way, meaning that there might be just a few systems in the family which are uniformly persistent. Thinking of applications, in models of real world processes given by cooperative and linear\//\/sublinear systems of ODEs or delay FDEs, it is highly improbable that we can experimentally or  numerically detect uniform persistence under these circumstances.
\par
As a consequence, there is a general need for definitions of persistence given globally for the family of systems over the hull of a particular non-autonomous system with a recurrent behaviour in time. This is the collective approach that has been taken in~\cite{noos7} and~\cite{obsa}.
This supports the coherence of the results in the previous section, as what happens in Nicholson systems regarding persistence cannot at all be given for granted. In connection with this, another general class of systems inside the class of globally cooperative and sublinear systems is determined, for which the individual uniform persistence implies the collective uniform persistence.
\par
Following this outline, first of all we characterize the property of uniform persistence in the case of a scalar linear ODE.
\begin{prop}\label{prop-linear ap}
Given a continuous function $a:\R\to\R$, let us consider the scalar linear equation
\begin{equation*}
y'(t)=a(t)\,y(t)\,,\quad t\in\R\,,
\end{equation*}
and for each $y_0\in\R$ let us denote by $y(t,y_0)$ the solution such that $y(0,y_0)=y_0$.
Then, the following conditions are equivalent:
\begin{itemize}
\item[(i)]  The equation is uniformly persistent, in the sense that there exists an $m>0$ such that for any $y_0>0$ there exists a $t_0=t_0(y_0)$ such that $y(t,y_0)\geq m$ for any $t\geq t_0$.
\item[(ii)] $\displaystyle\lim_{t\to\infty} \int_0^t a(s)\,ds=\infty$.
\end{itemize}
\end{prop}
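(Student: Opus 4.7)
The proof rests on the explicit integration of the scalar equation: the solution through $y_0\in\R$ is
\[
y(t,y_0)=y_0\,\exp\!\Bigl(\int_0^t a(s)\,ds\Bigr),\qquad t\in\R.
\]
All the dynamical information is therefore encoded in the real-valued function $A(t):=\int_0^t a(s)\,ds$, and the plan is simply to translate each of the two conditions into a statement about $A(t)$.

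For the implication (ii)$\Rightarrow$(i), I would fix any constant $m>0$ (for concreteness $m=1$) and observe that, since $A(t)\to\infty$, for every $y_0>0$ the quantity $y_0\,e^{A(t)}$ tends to $+\infty$. Hence there exists $t_0=t_0(y_0)$ with $y(t,y_0)\geq m$ for all $t\geq t_0$, which is precisely uniform persistence.

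For the converse (i)$\Rightarrow$(ii), I would argue by contradiction. Assume that $A(t)$ does not tend to $+\infty$; then $\liminf_{t\to\infty}A(t)<\infty$, so there is a sequence $t_n\uparrow\infty$ and a constant $C\in\R$ such that $A(t_n)\leq C$ for every $n$. Now exploit the scaling of linear equations: given the alleged persistence constant $m>0$, choose $y_0>0$ small enough that $y_0\,e^{C}<m$. Then for every $n$,
\[
y(t_n,y_0)=y_0\,e^{A(t_n)}\leq y_0\,e^{C}<m,
\]
which contradicts the existence of a time $t_0(y_0)$ from which onwards $y(t,y_0)\geq m$, since $t_n\to\infty$ eventually exceeds $t_0(y_0)$. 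This contradiction proves $A(t)\to\infty$.

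There is essentially no genuine obstacle in this proof; the only delicate point worth emphasising is that the quantifiers in Definition (i) require a single $m$ to work for \emph{every} positive initial datum, and it is exactly the freedom to shrink $y_0$ in the converse direction that forces $A(t)$ to grow without bound rather than merely to stay above some threshold. Note also that the statement is formulated for an arbitrary continuous $a$, so no use of almost periodicity or of the hull construction is needed here; the proposition serves as the scalar building block that the example of Section~\ref{sec-ejemplo} will exploit to exhibit the failure of robustness.
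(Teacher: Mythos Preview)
Your proposal is correct and follows essentially the same route as the paper: both arguments use the explicit solution $y(t,y_0)=y_0\,e^{A(t)}$ and, for the nontrivial direction (i)$\Rightarrow$(ii), exploit the freedom to take $y_0$ arbitrarily small. The only cosmetic difference is that the paper proves (i)$\Rightarrow$(ii) directly (given $M>0$, pick $y_0$ with $\log(m/y_0)>M$ and deduce $A(t)\geq M$ for $t\geq t_0$), whereas you phrase the same idea as a proof by contradiction via a bounded subsequence of $A(t_n)$; the underlying mechanism is identical.
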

\begin{proof}
(i)$\Rightarrow$(ii) To see that the limit is infinity, take any $M>0$. Then, we can take $y_0>0$ small enough so that $\log(m/y_0)>M$. Now, associated to $y_0$ there exists a $t_0=t_0(y_0)$ such that $y(t,y_0)\geq m$ for any $t\geq t_0$. Now, as in this scalar linear case
\[
y(t,y_0)=y_0\,e^{\int_0^t a(s)\,ds}\,,\quad t\in\R\,,
\]
it follows immediately that $\int_0^t a(s)\,ds\geq M$ for any $t\geq t_0$, and we are done.
\par
(ii)$\Rightarrow$(i) In this situation, all solutions with positive initial data go to $\infty$ as $t\to\infty$, so that the definition of uniform persistence holds for any value of $m>0$.
\end{proof}
We now provide the announced example. It is based on a previous example given by Conley and Miller~\cite{comi}, although examples of the same nature date back to the end of the nineteenth century in the work by Poincar\'{e}: for instance, see~\cite{poin}.
\begin{eje}\label{ejemplo}
Let $f(t)$ be the map constructed  in Conley and Miller~\cite{comi} with the following properties: \begin{itemize}
\item[i)] $f:\R\to \R$ is almost periodic;
\item[ii)] $\displaystyle\lim_{t\to\infty} \int_0^t f(s)\,ds=\infty$;
\item[iii)] $f$ has zero mean value, that is, $\displaystyle\lim_{t\to\infty} \frac{1}{t}\,\int_0^t f(s)\,ds=0$.
\end{itemize}
\par
In this situation, on the one hand one looks at the equation $y'(t)=f(t)\,y(t)$, which satisfies condition (ii) in Proposition~\ref{prop-linear ap}, so that it is uniformly persistent; and, on the other hand, we consider the family of linear scalar almost periodic equations over the hull  $\W$ of $f$, that is,
\begin{equation}\label{linear ap hull}
y'(t)= g(t)\,y(t)\,,\quad t\in\R\,, \quad\text{ for each } \; g\in \W\,,
\end{equation}
which is often written as $y'(t)= F(\w{\cdot}t)\,y(t)$, $t\in\R$, for each $\w\in \W$, for the continuous map $F$ on $\W$ given by $F(\w)=\w(0)$.
\par
Now, once more according to Definition 3.1  in~\cite{obsa}, we say that the family of equations over the hull~\eqref{linear ap hull} is {\em uniformly persistent\/} if there exists an $m>0$ such that for any $g\in\W$ and any $y_0>0$ there exists a time $t_0=t_0(g,y_0)$ such that $y(t,g,y_0)\geq m$ for any $t\geq t_0$, where $y(t,g,y_0)$ is the solution of the equation given by $g$ with initial value $y_0$ at time $t=0$.
\par
Thus, the condition of uniform persistence for the family of equations on the hull needs condition  (ii) in Proposition~\ref{prop-linear ap} to be satisfied for any $g\in \W$. However, this is not the case in this concrete example. On the one hand, the set of maps $g$ in $\W$ for which the corresponding equation is not uniformly persistent is of full measure. This is a corollary of Theorem~1 in Shneiberg~\cite{shne}: under the zero mean value assumption on $f$, for almost every  $g$ in $\W$ there exists a sequence $t_n\to \infty$ such that  $\int_0^{t_n} g(s)\,ds = 0$ for every $n\geq 1$. Note that these $g$ are among the set of so-called (Poincar\'{e}) {\it recurrent\/} points at $\infty$, meaning that there exists a sequence $t_n\to \infty$ such that  $\lim_{n\to\infty}\int_0^{t_n} g(s)\,ds = 0$.   An application of Fubini's theorem permits to see that for almost every recurrent point $g$, its orbit is made of recurrent points too. Then, the set 
\begin{equation}\label{pr}
\W_1=\{g\in \W \mid g(t+\,{\cdot}\,) \;\text{is recurrent at $\infty$ for every}\;t\in\R \}\,,
\end{equation} 
which is invariant, has full measure. 
On the other hand, to the almost periodic function $f(t)$ with mean value zero and unbounded integral we can apply Theorem~3.7 in Johnson~\cite{john} which affirms that the set $\W_2\subset\W$ made up by those  $g$ for which the integral has a strong oscillatory behaviour, namely:
\begin{align*}
&\liminf_{t\to \infty} \int_0^t g(s)\,ds = -\infty\,,\quad \limsup_{t\to \infty} \int_0^t g(s)\,ds = \infty\,,\\ &\liminf_{t\to -\infty} \int_0^t g(s)\,ds = -\infty\,,\quad \limsup_{t\to -\infty} \int_0^t g(s)\,ds = \infty\,,
\end{align*}
is a residual set, that is, a topologically big set. Since clearly $\W_2\subset \W_1$, $\W_1$ is also a residual set.
\end{eje}
Connecting with this linear scalar example, it is clear that whenever the former equation is included as a decoupled $1$-dimensional subsystem of any $n$-dimensional ($n\geq 2$) linear or nonlinear system of  almost periodic ODEs or delay FDEs which is uniformly persistent, the family over the hull cannot be uniformly persistent.
\par
Having noticed that the non-robust phenomenon can well appear in higher dimensions, in the following results we   describe what is behind this situation, in the monotone and linear\//\/sublinear settings: the thing is that the uniform persistence of the individual almost periodic system is not a representative quality, as there are just a few systems in the hull with the persistence property, from both a topological and a measure theory points of view.
\par
Although the results are stated in the case of delay FDEs, they can just be rephrased for ODEs. In that case the proofs follow the same lines with some simpler arguments because of the finite-dimensional scenario. Also, the linear case is formulated in a more general context than that of the hull, because it is going to be used as a basis for the sublinear setting.
\begin{teor}\label{teor lineal n}
Let $(\W,{\cdot},\R)$ be a minimal and uniquely ergodic flow and let us consider a family of linear cooperative delay systems over $\W$,
\begin{equation}\label{linearFDE hull}
y'(t)=A(\w{\cdot}t)\,y(t)+B(\w{\cdot}t)\,y(t-1)\,,\quad \w\in\W\,,
\end{equation}
for certain continuous maps $A,\,B:\W\to M_{n}(\R)$ taking values in the set of $n\times n$-real matrices. Let us assume that for a certain $\w_0\in\W$ the corresponding system~\eqref{linearFDE hull} is uniformly persistent, that is, there exists an $m>0$ such that for any initial map $\varphi\in C([-1,0],\R^n)$,  $\varphi\gg 0$ there exists a time $t_0=t_0(\w_0,\varphi)$ such that
\begin{equation}\label{persistencia}
     y_i(t,\w_0,\varphi)\geq m \quad \text{for any }\;t\geq t_0 \;\text{ and any }\; i=1,\ldots,n \,,
\end{equation}
whereas the whole family of systems over  $\W$ is not uniformly persistent, in the sense of Definition {\rm\ref{defi persistence hull} (i)}.
Then, there exists an invariant, residual set $\W_1\subset \W$ of full measure such that for any $\w\in\W_1$, system~\eqref{linearFDE hull} is not uniformly persistent.
\end{teor}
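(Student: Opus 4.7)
The plan is to reduce the statement to a one-dimensional scalar cocycle by using the continuous separation admitted by cooperative linear families, and then to import the Shneiberg--Johnson-type argument of Example~\ref{ejemplo}. First I would invoke the continuous separation of the induced linear monotone skew-product semiflow: there are a continuous map $v:\W\to X$ with $v(\w)\gg 0$ and $\|v(\w)\|=1$, together with a positive continuous multiplicative cocycle $c(t,\w)$ satisfying $\Phi(t,\w)\,v(\w)=c(t,\w)\,v(\w{\cdot}t)$ and $c(t+s,\w)=c(t,\w{\cdot}s)\,c(s,\w)$. Any $\varphi\gg 0$ decomposes as $\varphi=\alpha(\varphi,\w)\,v(\w)+z$ with $\alpha(\varphi,\w)>0$ and $z\in X_2(\w)$ (using (S3)), and property (S5) gives $\Phi(t,\w)\,\varphi=c(t,\w)\,[\alpha(\varphi,\w)\,v(\w{\cdot}t)+O(e^{-\delta t})]$. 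Since $v(\w)(s)$ is bounded below componentwise uniformly in $(\w,s)$ by continuity on a compact domain, and $\alpha(\varphi,\w)$ can be made arbitrarily small by scaling $\varphi$, a direct check shows that the individual system at $\w$ is uniformly persistent in the sense of~\eqref{persistencia} if and only if $c(t,\w)\to\infty$ as $t\to\infty$; family u-persistence likewise corresponds to this convergence being uniform in $\w\in\W$.

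Next I would identify the upper Lyapunov exponent. As in the proof of Theorem~\ref{teor Nicholson system}, the Sacker--Sell spectrum of the one-dimensional invariant subbundle reduces to a singleton $\{\lambda\}$ by minimality and unique ergodicity of the base flow, and the limit $\lim_{t\to\infty}(\log c(t,\w))/t=\lambda$ is attained at every $\w\in\W$. The hypothesis that the system at $\w_0$ is u-persistent forces $\lambda\geq 0$; and if it were $\lambda>0$, the convergence $c(t,\w)\to\infty$ would automatically be uniform in $\w$, contradicting the assumed failure of family u-persistence. Hence $\lambda=0$. Using the $\mathcal C^1$ structure of the delay FDE together with the continuous-separation direction $v$, the additive cocycle $\log c(t,\w)$ can be written as a Birkhoff integral $\log c(t,\w)=\int_0^t b(\w{\cdot}s)\,ds$ for some continuous $b:\W\to\R$, so that $\int_\W b\,d\mu=\lambda=0$.

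At this point the argument of Example~\ref{ejemplo} applies almost verbatim. By hypothesis $\int_0^t b(\w_0{\cdot}s)\,ds=\log c(t,\w_0)\to\infty$, so $b$ is a continuous zero-mean function on $(\W,{\cdot},\R)$ whose Birkhoff integral is unbounded along at least one orbit. The corresponding extension of Shneiberg's theorem~\cite{shne} to continuous zero-mean functions on minimal uniquely ergodic flows yields a $\mu$-full-measure set of $\w$ along whose orbit the Birkhoff integral returns arbitrarily close to $0$, and the Fubini argument behind~\eqref{pr} produces an invariant $\mu$-full-measure subset $\W_1$. The analogue of Theorem~3.7 of Johnson~\cite{john} provides a residual set $\W_2$ of strongly oscillating orbits; since $\W_2\subset\W_1$, the set $\W_1$ is also residual. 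For every $\w\in\W_1$ the cocycle $c(\cdot,\w)$ takes values arbitrarily close to $1$ at arbitrarily large times and therefore does not tend to infinity, so by the first paragraph the corresponding system~\eqref{linearFDE hull} is not uniformly persistent.

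The main obstacle I expect is the Birkhoff representation $\log c(t,\w)=\int_0^t b(\w{\cdot}s)\,ds$ with a \emph{continuous} generator $b$: one has to differentiate $v(\w{\cdot}t)$ along the flow and check that the resulting generator depends continuously on $\w$. A secondary concern is verifying that Shneiberg's theorem and Johnson's residuality result, quoted in Example~\ref{ejemplo} in the almost periodic setting, extend to continuous zero-mean functions on arbitrary minimal uniquely ergodic flows; this should follow because both proofs use only minimality, unique ergodicity, and the continuity of $b$.
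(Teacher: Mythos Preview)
Your overall strategy---reduce to a one-dimensional scalar cocycle via a continuous separation and then invoke the Shneiberg--Johnson argument---is the right one and coincides with what the paper does. However, there is a genuine gap at the very first step: you \emph{invoke} a continuous separation for the full linear semiflow $L$ on $\W\times C([-1,0],\R^n)$, but no such separation is asserted to exist in general. The existence of a continuous separation (even of type~II) for cooperative linear delay families is tied to an irreducibility condition on the coefficient matrices; for a general cooperative pair $(A,B)$ it can simply fail. This is precisely why the paper first performs the block lower-triangular decomposition of $\bar A+\bar B$ into irreducible diagonal blocks and only then obtains a continuous separation $L_j$ on each block $\W\times C([-1,0],\R^{n_j})$. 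Without that reduction, your splitting $\varphi=\alpha(\varphi,\w)\,v(\w)+z$ and the subsequent equivalence ``u-persistence at $\w$ $\Longleftrightarrow$ $c(t,\w)\to\infty$'' are not available. Moreover, even after the reduction, the continuous separation is in general only of type~II, so your use of~(S3) to conclude $\alpha(\varphi,\w)>0$ for every $\varphi\gg 0$ is not justified either.

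Once the block decomposition is in place, the paper isolates an index $j\in I$ with $\lambda_j=0$ (the case $\lambda_j<0$ being excluded exactly as you argue), and then carries out your Shneiberg--Johnson scheme on the $n_j$-dimensional irreducible subsystem. Your acknowledged obstacle concerning the continuous generator $b$ is real and is handled in the paper by passing to the $L^2$ normalization of $v(\w)$ as in Calzada et al.~\cite{calz}, which makes $t\mapsto\log\widetilde c(t,\w)$ differentiable with $\widetilde a(\w)=\left.\frac{d}{dt}\log\widetilde c(t,\w)\right|_{t=0}$ continuous on $\W$; this is not a trivial point and should be included explicitly rather than deferred.
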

\begin{proof}
First of all, systems~\eqref{linearFDE hull} are assumed to be cooperative, that is, all the off-diagonal entries of $A(\w)=[a_{ij}(\w)]$ and all the entries of $B(\w)=[b_{ij}(\w)]$ are nonnegative maps on $\W$. Then, the solutions of the family~\eqref{linearFDE hull} generate a linear monotone skew-product semiflow $L:\R_+\times\W\times C([-1,0],\R^n)\to \W\times C([-1,0],\R^n)$. Note that Definition~\ref{defi persistence hull} (i) of u-persistence can be naturally applied to the cooperative linear family~\eqref{linearFDE hull}.
\par
Now, once more following the procedure introduced in~\cite{noos7}, after a permutation of the variables, if necessary, we can assume that the matrix $\bar A+\bar B=[\bar a_{ij}+\bar b_{ij}]$ defined as
\begin{align*}
\bar a_{ij} &= \sup_{\w\in \W} a_{ij}(\w)\,\;\, \text{
for }\, i\not= j\,, \;\, \text{ and }\,\bar a_{ii}=0\,,
\\ \bar b_{ij}& = \sup_{\w\in \W} b_{ij}(\w)\,\;\,
\text{ for }\, i\not= j\,, \;\, \text{ and }\,\bar b_{ii}=0\,,
\end{align*}
 has the form
 \begin{equation*}
\left[\begin{array}{cccc}
\bar A_{11}+\bar B_{11}  & 0 &\ldots & 0 \\
\bar A_{21}+\bar B_{21}  & \bar A_{22} +\bar B_{22}&  \ldots& 0 \\
\vdots & \vdots &\ddots  & \vdots \\
\bar A_{k1}+\bar B_{k1} & \bar A_{k2}+\bar B_{k2} & \ldots& \bar A_{kk}+\bar B_{kk}
\end{array}\right]\,,
\end{equation*}
and the  diagonal blocks, denoted by  $\bar A_{11}+\bar B_{11},\ldots, \bar
A_{kk}+\bar B_{kk}$, of size $n_1,\ldots,n_k$ respectively
($n_1+\cdots + n_k=n$), are irreducible. For each $j=1,\ldots,k$, let $L_j$ be the linear skew-product semiflow induced on $\W\times C([-1,0],\R^{n_j})$
by the solutions of the linear systems for  $\w\in\W$ given by
the corresponding diagonal block of~\eqref{linearFDE hull},
\begin{equation}\label{subsistemasFDE}
  y'(t)=A_{jj}(\w{\cdot}t)\,y(t)+
  B_{jj}(\w{\cdot}t)\,y(t-1)\,,\quad t> 0\,.
\end{equation}
Then, $L_j$ admits a continuous separation (of type II) and its  principal spectrum reduces to the upper Lyapunov exponent of the trivial minimal set $K^j=\W\times\{0\}\subset \W\times C([-1,0],\R^{n_j})$, let us call it $\lambda_j$, because $\W$ is minimal and uniquely ergodic.
As stated in  Theorems~5.3 and~5.4 in~\cite{obsa}, which apply in this situation, the linear cooperative family~\eqref{linearFDE hull} is u-persistent if and only if $\lambda_j>0$ for any $j\in I$, for the set of indexes $I$ defined as $I=\{1\}$ if the matrix $\bar A+\bar B$ is irreducible (i.e., if $k=1$), and
\[
I=\{j\in\{1,\ldots,k\} \,\mid\, \bar A_{ji}+\bar B_{ji}=0 \text{ for any } i\not= j\}
\]
if the matrix  $\bar A+\bar B$ is reducible (i.e., if $k>1$); that is, $j\in I$ if and only if any off-diagonal block in the row of $\bar A_{jj}+\bar B_{jj}$ is null, and consequently the corresponding  system~\eqref{subsistemasFDE} is a decoupled subsystem of the total system for each $\w\in\W$. In particular this means that, as system~\eqref{linearFDE hull} for $\w_0$ is u-persistent, for any $j\in I$ system~\eqref{subsistemasFDE} for $\w_0$ is u-persistent as well.
\par
At this point, since by hypothesis the whole family is not u-persistent, at least for some $j\in I$ we have that $\lambda_j\leq 0$. It cannot be $\lambda_j<0$, as in that case all solutions of the whole family~\eqref{subsistemasFDE} would tend to $0$ as $t\to\infty$, contradicting the u-persistence for $\w_0$. Therefore it must be $\lambda_j=0$ for some $j\in I$.
\par
So, let us fix such a $j\in I$ with $\lambda_j=0$, and let $C([-1,0],\R^{n_j})=X_1(\w)\oplus X_2(\w)$ for $\w\in\W$ be the continuous splitting given by the continuous separation of the linear semiflow  $L_j(t,\w,\varphi)=(\w{\cdot}t,\Phi_j(t,\w)\,\varphi)$, $(t,\w,\varphi)\in \R_+\times \W\times C([-1,0],\R^{n_j})$. Recall that $X_1(\w)=\spa\{ v(\w)\}$  determines a one-dimensional invariant subbundle, with $v:\W\to C([-1,0],\R^{n_j})$ continuous and such that  $v(\w)\gg 0$ and $\|v(\w)\|=1$ for any $\w\in \W$. In particular, $0<v_i(\w)(s)\leq 1$ for any $\w\in\W$, any component $i=1,\ldots,n_j$ and any $s\in [-1,0]$.
\par
Now, we follow the arguments used in Proposition~5.1 (iii) in Calzada et al.~\cite{calz} in a quasi-periodic setting, which remain valid here. All the details are explained in that paper. For the norm $\|\varphi\|_2=\,\left(\|\varphi(0)\|^2+\int_{-1}^{0}\|\varphi(s)\|^2\,ds\right)^{1/2}$ in the space $Y=L^2([-1,0],\R^{n_j},\mu_0)$ for the measure $\mu_0=\delta_0+l$, where $\delta_0$ is the Dirac measure concentrated at $0$ and $l$ is the Lebesgue measure on $[-1,0]$, we consider the normalized functions $\wit v(\w)=v(\w)/\|v(\w)\|_2$ and we recall that  there is a $\delta>0$ such that $\delta\leq \|v(\w)\|_2$ for any $\w\in \W$.  Then, we consider the map $\wit c(t,\w)$  satisfying $\Phi_j(t,\w)\,\wit v(\w)=\wit c(t,\w)\,\wit v(\w{\cdot}t)$ for any $t\geq 0$ and $\w\in\W$, which can be extended to the whole line  fulfilling  the linear cocycle identity $\wit c(t+s,\w)=\wit c(t,\w{\cdot}s)\,\wit c(s,\w)$
for any $t,s\in \R$ and any $\w\in \W$. Besides, the expression $\wit a(\w)=\left.\frac{d}{dt} \log \wit c(t,\w)\right|_{t=0}$ defines a continuous map on $\W$. We remark that the $L^2$-norm has been taken in order to have nice differentiability properties on the scalar map $\log \wit c(t,\w)$ associated with the continuous separation. Moreover, as shown in~\cite{calz}, the Lyapunov exponent of each $\w\in\W$ can be calculated as
\[
\lambda_j(\w)= \limsup_{t\to\infty} \frac{\log \wit c(t,\w)}{t}\,,
\]
and arguing as in the proof of Theorem~\ref{teor Nicholson system}, in what refers to the application of the theory by Sacker and Sell~\cite{sase}, we can conclude that for any $\w\in\W$, the upper Lyapunov exponent $\lambda_j=\lambda_j(\w)$, and so
\[
\lambda_j=\lim_{t\to \infty} \frac{\log \wit c(t,\w)}{t}=\lim_{t\to \infty}
\frac{1}{t} \int_0^t  (\log \wit c(s,\w))'ds=\lim_{t\to \infty}
\frac{1}{t} \int_0^t \wit a(\w{\cdot}s)\,ds=\int_{\W} \wit a\,d\mu\,,
\]
where $\mu$ is the unique ergodic measure on $\W$ and  Birkhoff ergodic theorem has been applied to the map $\wit a\in C(\W)$ in the last equality.
\par
As a consequence, since $\lambda_j=0$, then $\wit a\in C(\W)$ has zero mean value and $\wit c(t,\w)$ is the scalar linear cocycle giving the solutions $h(t,\w,y_0)=y_0\,\wit c(t,\w)$ $(y_0\in \R)$ of the family of recurrent scalar linear equations for $\w\in\W$,
\begin{equation}\label{ec h}
h'(t)=\wit a(\w{\cdot}t)\,h(t)\,,\quad   t\in \R\,.
\end{equation}
\par
Then, arguing as in Example~\ref{ejemplo}, after the zero mean value of  $\wit a$ we can deduce that there is an invariant set $\W_1\subset \W$ of full measure formed by recurrent points at $\infty$. In particular,  for any $\w\in\W_1$,   $\lim_{n\to\infty}\int_0^{t_n} \wit a(\w{\cdot}s)\,ds = 0$ for a sequence $t_n\to \infty$. Since $\Phi_j(t,\w)\,\wit v(\w)=\wit c(t,\w)\,\wit v(\w{\cdot}t)$ for any $t\geq 0$ and $\w\in\W$,
we get that for each $\w\in\W_1$, $\Phi_j(t_n,\w)\,\alpha\,\wit v(\w)=\wit c(t_n,\w)\,\alpha\,\wit v(\w{\cdot}t_n)\to\alpha\,\wit v(\w{\cdot}t_n)$, as $n\to\infty$ for a sequence $t_n=t_n(\w)\to \infty$ and for any $\alpha>0$, which precludes the property of u-persistence for system~\eqref{subsistemasFDE}, and consequently also for system~\eqref{linearFDE hull} whenever $\w\in\W_1$.
\par
Besides, the set $\W_1$ is also residual. To see it, note that the u-persistence of system~\eqref{linearFDE hull} for $\w_0$ implies for the decoupled subsystem~\eqref{subsistemasFDE} that, for each $y_0>0$, given the initial map $y_0\,\wit v(\w_0)\gg 0$, there is a $t_0=t_0(y_0\,\wit v(\w_0))$ such that $\Phi_j(t,\w_0)\,y_0\,\wit v(\w_0)=\wit c(t,\w_0)\,y_0\,\wit v(\w_0{\cdot}t)\geq \bar m$ for any $t\geq t_0$, for the map $\bar m\in C([-1,0],\R^{n_j})$ with all components identically equal to $m$. Therefore, for  $\delta>0$ such that $\delta\leq \|v(\w)\|_2$ for any $\w\in \W$, it holds that for any $t\geq t_0$,
\[
\frac{1}{\delta}\,y_0\,\wit c(t,\w_0)\geq y_0\,\wit c(t,\w_0)\,\frac{v_i(\w_0{\cdot}t)}{\|v(\w_0{\cdot}t)\|_2}\geq m\,,
\]
where any component of $v(\w_0{\cdot}t)$ can been chosen. That is, $y_0\,\wit c(t,\w_0)\geq m\,\delta$ for any $t\geq t_0$, and this is exactly u-persistence for the scalar linear equation~\eqref{ec h} for $\w_0$, with eventual positive lower bound $m\,\delta$. Proposition~\ref{prop-linear ap} then asserts that $\lim_{t\to \infty}\int_0^t \wit a(\w_0{\cdot}s)\,ds=\infty$, and in particular this means unbounded integral of $\wit a$ along the orbit of $\w_0$. This, together with the zero mean value of $\wit a$, permits to apply once more Theorem~3.7 in~\cite{john} to conclude that the set $\W_2\subset\W$ composed by those  $\w$ for which the integral $\int_0^t \wit a(\w{\cdot}s)\,ds$ has a strong oscillatory behaviour, in the precise terms explained in  Example~\ref{ejemplo}, is a residual set. To finish, since clearly $\W_2\subset \W_1$, we conclude that $\W_1$ is also a residual set.
\end{proof}
Finally, we consider cooperative and sublinear systems.
Since we are especially interested in systems which are modelling real world biological processes, it is quite natural to assume that $0$ is a solution and that solutions starting with nonnegative initial data keep being nonnegative while defined. This is so if $f(t,0)=0$ for any $t\in\R$ in the ODEs case and $f(t,0,0)=0$ for any $t\in\R$ in the delay case, together with the cooperative condition. In this situation, uniform persistence is considered in the sense of~\eqref{persistencia} for an individual delay system, and in the sense of Definition~\ref{defi persistence hull} (i) for the induced families over the hull.
\par
More precisely, we are assuming the following hypotheses either for system~\eqref{ode} $y'(t)=f(t,y(t))$ or for the delay system~\eqref{delay} $y'(t)=f(t,y(t),y(t-1))$.
\begin{itemize}
\item[(H1)] The function $f$ defining the system is uniformly almost periodic, it satisfies
the regularity and admissibility conditions stated in Section~\ref{sec-preli}, and the identically null map is a solution.
\item[(H2)] The system is cooperative and sublinear.
\end{itemize}
Recall that, under the assumption that $0$ is a solution, if the system is cooperative and concave, it is also sublinear, so that this case is also included. Once more, we only write the result for delay equations.
\begin{teor}\label{teor sublineal}
Let us consider a finite-delay FDEs nonlinear system~\eqref{delay} under assumptions {\rm (H1)} and {\rm (H2)} and let $\tau$ be the monotone and sublinear skew-product semiflow of class $\mathcal C^1$ defined on $\W\times C_+([-1,0],\R^{n})$ by the solutions $y(t,\w,\varphi)$ of the family of systems over the hull~\eqref{delayfamily} $y'(t)=F(\w{\cdot}t,y(t),y(t-1))$, $\w\in\W$.
Then:
\begin{itemize}
\item[(i)] The family of systems~\eqref{delayfamily}  is uniformly persistent  if and only if the family of linearized  systems along the null solution is   uniformly persistent.
\item[(ii)] If  system~\eqref{delay} is uniformly persistent, whereas the family of systems over the hull~\eqref{delayfamily} is not uniformly persistent, then there exists an invariant, residual set $\W_1\subset \W$ of full measure such that for any $\w\in\W_1$, system~\eqref{delayfamily} is not uniformly persistent.
\end{itemize}
If we assume further that there exists a map $\varphi_0\gg 0$ such that the solution $y(t,\varphi_0)$ of system~\eqref{delay} with initial value $\varphi_0$ is bounded, then:
\begin{itemize}
\item[(iii)] System~\eqref{delay} is uniformly persistent  if and only if the family of systems~\eqref{delayfamily} is uniformly persistent.
\item[(iv)] The uniform persistence of system~\eqref{delay} can be characterized by a set of computable Lyapunov exponents determined by the structure of its linearized system along $0$.
\end{itemize}
\end{teor}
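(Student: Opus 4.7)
The plan is to combine the sublinear-to-linear comparison used in Theorem~\ref{teor Nicholson pers}, the residual/full-measure result for linear skew-products given in Theorem~\ref{teor lineal n}, and the scalar Lyapunov-exponent computation underlying Theorem~\ref{teor Nicholson system}. For part~(i), I would first extract the basic consequence of sublinearity at $0$: since $u(t,\w,\lambda\varphi)\geq \lambda u(t,\w,\varphi)$ for $\lambda\in(0,1]$ and $0$ is a solution, dividing by $\lambda$ and letting $\lambda\to 0^+$ yields the pointwise majorization
\[
u(t,\w,\varphi)\leq D_\varphi u(t,\w,0)\,\varphi,\qquad \w\in\W,\ \varphi\geq 0,\ t\geq 0,
\]
which immediately gives ``nonlinear u-persistent $\Rightarrow$ linear u-persistent'' for the family. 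For the converse I would follow the pattern of Theorem~\ref{teor obayasanz}(ii): write the linearized family in the block triangular form associated with the matrix $\bar A+\bar B$ and invoke the continuous separation (of type~II) admitted by each irreducible diagonal block; since u-persistence of the linearized family is equivalent to the positivity of the leading Lyapunov exponents $\lambda_j$ for every $j\in I$, the leading strongly positive eigenfunctions $v^j(\w)$ grow exponentially and, together with cooperativity, monotonicity and the sublinearity of~$\tau$, produce a uniform lower bound $\psi\gg 0$ for the nonlinear flow, as demanded by Definition~\ref{defi persistence hull}(i).

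For part~(ii), the easy implication of~(i) applied to the individual system at $\w_0$ forces the linearized equation at $\w_0$ to be uniformly persistent, while by~(i) itself the hypothesis that the nonlinear family is not u-persistent means that the linearized family is not u-persistent either. Theorem~\ref{teor lineal n} then supplies the invariant residual set $\W_1\subset\W$ of full measure on which the linear equations fail u-persistence, and the majorization above transfers the failure back to the nonlinear flow: at every $\w\in\W_1$, for any $m>0$ one finds $\psi\gg 0$ and a sequence $t_n\to\infty$ along which $D_\varphi u(t_n,\w,0)\psi$ drops below $m\bar 1$, hence so does the dominated nonlinear orbit $u(t_n,\w,\psi)$.

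For part~(iii), the direction ``family u-persistent $\Rightarrow$ system~\eqref{delay} u-persistent'' is the argument already carried out at the beginning of the proof of Theorem~\ref{teor Nicholson pers}. For the non-trivial direction I would mimic the case~A1/A2 dichotomy used there, but now in the monotone and sublinear $\mathcal C^1$ framework of~\cite{nuos2}: the bounded strongly positive solution $y(t,\varphi_0)$ produces a well-defined omega-limit $\mathcal O(\w_0,\varphi_0)$, the u-persistence of the individual system places it above $\bar m\gg 0$, hence it contains a strongly positive minimal set~$K$, and the sublinear dichotomy implies either that $K$ is the unique hyperbolic copy of the base attracting all strongly positive orbits (from which u-persistence of the family is immediate) or that a family $K_s=\mathcal O(\w_0,s\bar\varphi)$ of strongly positive minimal sets exists, which is discarded just as in Theorem~\ref{teor Nicholson pers} by sandwiching $\bar m/2\gg 0$ below all $K_s$. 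Part~(iv) then follows by composing~(iii) and~(i) with the spectral characterization of linear u-persistence in terms of the exponents $\lambda_j$, $j\in I$, whose computability along the orbit of $\w_0$ via the norm of any strongly positive initial map under the $j$-th linearized block is obtained word for word as in the final step of the proof of Theorem~\ref{teor Nicholson system}, relying on unique ergodicity of $\W$ and the Sacker--Sell theory of~\cite{sase}.

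The main obstacle I expect is the converse implication in part~(i) without dissipativity hypotheses on the nonlinear system: the proof of Theorem~\ref{teor Nicholson pers} truncated the Nicholson nonlinearity into a bounded concave map to force ultimate boundedness and then applied Theorem~3.8 in~\cite{nuos4}; here one has to argue directly within the monotone and sublinear $\mathcal C^1$ setting of~\cite{nuos2}, controlling the leading one-dimensional bundle of the continuous separation without the safety net of a global bound on trajectories. This is also what makes the additional hypothesis of a bounded strongly positive solution essential in parts~(iii)--(iv).
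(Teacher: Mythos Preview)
Your treatment of parts~(i), (ii), and~(iv) is essentially that of the paper. For the forward direction of~(i) (linearized u-persistent $\Rightarrow$ nonlinear u-persistent) the paper simply invokes Theorem~5.4 of~\cite{obsa}, whereas you sketch a direct argument through the leading bundles; either route is viable, and your closing remark that the lack of dissipativity is the delicate point here is accurate. Part~(ii) is argued exactly as in the paper, and your description of~(iv) matches the paper's combination of~(i),~(iii), and the Sacker--Sell argument from the proof of Theorem~\ref{teor Nicholson system}.

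The genuine gap is in part~(iii). You import the two-case alternative A1/A2 from the \emph{concave} theory of~\cite{nuos4} (used in Theorem~\ref{teor Nicholson pers}), but the relevant structure theorem in the \emph{sublinear} setting of~\cite{nuos2} is a trichotomy: case~A1 (a unique strongly positive minimal set), case~A2 (infinitely many such minimal sets, but a lowest one $K^-$ exists), and case~A3 (infinitely many, with no lowest one). Your ``sandwich $\bar m/2\gg 0$ below all $K_s$'' argument is precisely what rules out case~A3, and that is how the paper uses it. But you treat everything outside~A1 as a single bad case to be discarded, and this is incorrect: the sublinear case~A2 is fully compatible with all minimal sets lying above $\bar m$, so the sandwiching argument yields no contradiction there. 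The paper does not try to eliminate~A2; instead it observes that in case~A2 the lowest minimal set $K^-$ has attractor properties (described in~\cite{nuos2}) strong enough to force u-persistence of the whole family, just as $K$ does in case~A1. The correct logic for~(iii) is therefore: discard~A3 by the u-persistence of system~\eqref{delay}, and in \emph{both}~A1 and~A2 read off u-persistence of the family from the attraction exerted by $K$ or $K^-$, respectively.
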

\begin{proof}
First of all, the same proof as that of Proposition~2.3 in~\cite{nuos3} for 2-dimensional systems permits to conclude that the induced semiflow is globally defined.
\par
(i) The transfer of the u-persistence from the linearized family to the nonlinear family is a direct consequence of Theorem~5.4 in~\cite{obsa} for general recurrent and  cooperative systems. Conversely, one just applies a comparison of solutions argument having in mind the inequality~\eqref{mayorante lineal} which also holds in the sublinear setting. More precisely, let us write down the family of linearized systems along the null solution, which is of the form~\eqref{linearFDE hull} for the matrix-valued continuous maps on $\W$ defined by $A(\w)=D_y F(\w,0,0)$ and $B(\w)=D_w F(\w,0,0)$, where we have written $F=F(\w,y,w)$. Then, $y_t(\w,\varphi) \leq D_\varphi y_t(\w,0)\,\varphi$, for any $\w\in\W$, $\varphi\geq 0$ and $t\geq 0$,
and the functions $z(t,\w,\varphi)=(D_\varphi y_t(\w,0)\,\varphi)(0)$ $(t\geq 0)$ are precisely the solutions of the linearized family. Therefore, the u-persistence of the sublinear family below forces the u-persistence of the linearized family above.
\par
(ii) As just mentioned, since the family over the hull~\eqref{delayfamily} is not u-persistent, neither is u-persistent the associated family~\eqref{linearFDE hull} of linearized systems along $0$, described in (i). On the other hand, calling $\w_0=f\in \W$ the element providing the initial system~\eqref{delay}, the u-persistence of system~\eqref{delay} implies that of the linearized system~\eqref{linearFDE hull} for $\w_0$.
Then, we can apply Theorem~\ref{teor lineal n} to assert that there exists an invariant, residual set $\W_1\subset \W$ of full measure such that for any $\w\in\W_1$, system~\eqref{linearFDE hull} is not u-persistent. Once more by the inequality in (i), this implies that neither is system~\eqref{delayfamily}  u-persistent for $\w\in\W_1$, and we are done.
\par
(iii) First of all, note that the existence of a bounded solution under the assumption of u-persistence of system~\eqref{delay} completely precludes the linear case.
Now, as it could not be otherwise, it is immediate that the u-persistence goes nicely from the family to a particular system.
\par
Conversely, if system~\eqref{delay} is u-persistent we apply to $\tau$ the dynamical description developed in  N\'{u}\~{n}ez  et al.~\cite{nuos2} for general monotone and sublinear skew-product semiflows.  Arguing as in the proof of Theorem~\ref{teor Nicholson pers} for Nicholson systems, for $\w_0=f$ we consider  the  orbit of $(\w_0,\varphi_0)$ which is bounded. Then, one can consider  its omega-limit set, which contains a minimal set $K$ necessarily lying on the zone $\W\times\{\varphi\in C_+([-1,0],\R^n)\mid \varphi\geq \bar m\}$, for the map $\bar m$ whose components are identically equal to $m$, the constant involved with the property of u-persistence of system~\eqref{delay}. In other words, there is a strongly positive minimal set for $\tau$. Thus, Theorem~3.8 in~\cite{nuos2} asserts that the dynamics suits one the following three cases: the so-called case A1 when $K$ is the unique minimal set strongly above $0$; case A2 when there are infinitely many minimal sets strongly above $0$ and, among them, there exists one $K^-$ which is the lowest one; or case A3 when there are infinitely many minimal sets strongly above $0$ but there is not a lowest one.
\par
Exactly as in the proof of Theorem~\ref{teor Nicholson pers}, case A3 is discarded thanks to the u-persistence of system~\eqref{delay}, and in both cases A1 and A2 the family~\eqref{delayfamily} turns out to be u-persistent, due to the attracting properties enjoyed by the minimal sets $K$ and $K^-$ respectively (see~\cite{nuos2} for more details).
\par
(iv) The statement of this item has not been written more precisely in order not to make the paper too long, but the reader is referred to Theorem~\ref{teor Nicholson system} for a very detailed statement in the same line in the case of almost periodic Nicholson systems.
\par
The key is in (i) and (iii) together, saying that the property of u-persistence for system~\eqref{delay} is equivalent to that of the family of systems~\eqref{delayfamily} and also to that of the family of linearized systems along $0$. Besides, for the associated linear family~\eqref{linearFDE hull} described in (i), which is cooperative, the property of u-persistence has been characterized in terms of a precise set of upper Lyapunov exponents $\{\lambda_j\mid j\in I\}$, as it has been explained in detail in the proof of Theorem~\ref{teor lineal n}.
\par
Once more, the theory by Sacker and Sell~\cite{sase} applies to the $1$-dimensional linear skew-product semiflow associated with the dynamics in the $1$-dimensional invariant subbundle given by the continuous separation of $L_j$ for each $j\in I$ (for $L_j$ defined in the proof of Theorem~\ref{teor lineal n}). As a consequence, for each $j\in I$ the exponent $\lambda_j=\lambda_j(\w)$ for any $\w\in\W$, and in particular $\lambda_j=\lambda_j(\w_0)$ for $\w_0=f$, the element providing the initial system~\eqref{delay}. So that, in the end, the property of u-persistence is characterized in terms of a set of Lyapunov exponents of some lower-dimensional linear systems chosen from the structure of the linearized system along $0$.
\end{proof}
To end the paper, we make a couple of remarks. First, a more general recurrent time variation rather than almost periodicity may be admitted  in the statement of Theorem~\ref{teor sublineal}~(i) and~(iii), as we just need $\W$ to be minimal (for instance, see~\cite{noos7}). However, the unique ergodicity of $\W$ is also needed in both (ii) and (iv).  Second and last, the fact that there exists a bounded solution is sometimes implicitly required in the literature by assuming the existence of an upper-solution (for instance, see Zhao~\cite{zhaox} and Mierczy{\'n}ski and Shen~\cite{mish}) or by asking the system to be dissipative.


\begin{thebibliography}{99}
\bibitem{aman} {\sc H. Amann}, Fixed point equations and
        nonlinear eigenvalue problems in ordered Banach spaces,
        {\em SIAM Rev.\/} {\bf 18} (1976), 620--709.
\bibitem{bebrid} {\sc L. Berezansky, E. Braverman, L. Idels}, Nicholson's blowflies differential equations revisited: main results and open problems, {\it  Appl. Math. Model.} {\bf 34} (2010), 1405--1417.
\bibitem{brva} {\sc D. Breda, E. Van Vleck}, Approximating Lyapunov exponents and Sacker-Sell spectrum for retarded functional differential equations, {\em Numer. Math} \textbf{126} (2014), 225-–257.
\bibitem{calz} {\sc J.A. Calzada, R. Obaya, A.M. Sanz}, Continuous separation for monotone skew-product semiflows: From theoretical to numerical results, {\em Discrete Contin. Dyn. Syst. Series B}
        \textbf{20} (3) (2015), 915--944.
\bibitem{capa}{\sc V.~Capasso},
        {\em Mathematical Structures of Epidemic Systems},
        Lecture Notes in Biomathematics {\bf 97}, Springer-Verlag, Berlin, 1993.
\bibitem{comi}  {\sc C.C. Conley, R.K. Miller}, Asymptotic stability without uniform stability: almost periodic coefficients, {\em J. Differential Equations} {\bf 1} (1965), 333--336.
\bibitem{elli} {\sc R. Ellis},         \textit{Lectures on Topological Dynamics\/},
        Benjamin, New York, 1969.
\bibitem{faria}  {\sc T. Faria}, Asymptotic behaviour for a class of delayed cooperative models with patch structure,  {\em Discrete Contin. Dyn. Syst. Series B}         \textbf{18} (6) (2013), 1567--1579.
\bibitem{faos}  {\sc T. Faria, R. Obaya, A.M. Sanz},   Asymptotic behaviour for a class of non-monotone delay differential systems with applications. {\em J. Dynamics Differential Equations} (2017). https://doi.org/10.1007/s10884-017-9572-8.
\bibitem{faro}  {\sc T. Faria, G. R\"{o}st}, Persistence, permanence and global stability for an
n-dimensional Nicholson system, {\em J. Dynamics Differential Equations} \textbf{26}
        (2014), 723--744.
\bibitem{gubl} {\sc W.S.C. Gurney, S.P. Blythe, R.M. Nisbet}, Nicholson's blowflies revisited, {\em Nature} \textbf{287} (1980), 17--21.
\bibitem{have} {\sc J.K. Hale, S.M. Verduyn Lunel},
        \textit{Introduction to Functional Differential
        Equations}, Applied Mathematical Sciences {\bf 99},
        Springer-Verlag, Berlin, Heidelberg, New York, 1993.
\bibitem{hosc} {\sc J. Hofbauer, S.J. Schreiber},
        To persist or not to persist?,
        {\em Nonlinearity}  \textbf{17} (2004), 1393--1406.
\bibitem{john} {\sc R. Johnson}, Minimal functions with unbounded integral,
        {\em Israel J. Math.} {\bf 31} (1978), 133--141.
\bibitem{lime}   {\sc X. Liu, J. Meng}, The positive almost periodic solution for Nicholson-type delay systems with linear harvesting term, {\em Appl. Math. Model.} \textbf{36} (2012),     3289--3298.
\bibitem{magl} {\sc M.C. Mackey, L. Glass}, Oscillation and chaos in physiological control systems, {\em Science} \textbf{197} (4300) (1977), 287--289.
\bibitem{mish} {\sc J. Mierczy{\'n}ski, W. Shen}, Lyapunov
        exponents and asymptotic dynamics in random Kolmogorov models,
        {\em J. Evol. Equ.} {\bf 4} (2004), 371--390.
\bibitem{nich} {\sc A.J. Nicholson}, An outline of the dynamics of animal populations, {\em Austral. J. Zool.} \textbf{2} (1954), 9--65.
\bibitem{noob1}  {\sc S. Novo, R. Obaya}, Non-autonomous functional differential equations and applications. {\em Stability and Bifurcation for non-autonomous
        differential equations}, 185--264, Lecture Notes in Math.
            \textbf{2065}, Springer-Verlag, Berlin, Heidelberg, 2013.
\bibitem{nooslondon} {\sc S. Novo, R. Obaya, A.M. Sanz},  Attractor minimal sets for non-autonomous delay functional differential equations with applications for neural networks, {\em Proc. R. Soc. Lond. Ser.
A Math. Phys. Eng. Sci.} {\bf 461} (2005),  2767-–2783.
\bibitem{noos6} {\sc S. Novo, R. Obaya, A.M. Sanz}, Topological dynamics for monotone skew-product semiflows with applications,  {\em J. Dynamics Differential Equations} {\bf 25} (4) (2013), 1201--1231.
\bibitem{noos7} {\sc S. Novo, R. Obaya, A.M. Sanz}, Uniform persistence and upper Lyapunov exponents for monotone skew-product semiflows, {\em Nonlinearity} \textbf{26} (9) (2013), 2409--2440.
\bibitem{nuos2} {\sc C. N\'{u}\~{n}ez, R. Obaya, A.M. Sanz},
       Minimal sets in monotone and sublinear skew-product semiflows I:
       The general case, {\it J. Differential Equations\/} \textbf{248} (2010),
       1879--1897.
\bibitem{nuos3} {\sc C. N\'{u}\~{n}ez, R. Obaya, A.M. Sanz}, Minimal sets in monotone and
    sublinear skew-product semiflows II: Two-dimensional systems of differential
    equations,  {\it J. Differential Equations\/} \textbf{248} (2010), 1899--1925.
\bibitem{nuos4} {\sc C. N\'{u}\~{n}ez, R. Obaya, A.M. Sanz},
        Minimal sets in monotone and concave skew-product semiflows I:
        A general theory, {\it J. Differential Equations\/} \textbf{252} (10) (2012),
       5492--5517.
\bibitem{obsa} {\sc R. Obaya, A.M. Sanz}, Uniform and strict persistence in monotone skew-product semiflows with applications to non-autonomous Nicholson systems, {\it J. Differential Equations\/} \textbf{261} (2016),
       4135--4163.
\bibitem{poin} {\sc H. Poincar\'{e}},  Sur les courbes  d\'{e}finies par les \'{e}quations diff\'{e}rentielles IV, {\it Journal de Math\'{e}matiques Pures et Appliqu\'{e}es} {\bf 2} (1886), 151--217.
\bibitem{pote} {\sc P. Pol\'{a}\v{c}ik, I. Tere\v{s}\v{c}\'{a}k}, Exponential
        separation and invariant bundles for maps in ordered Banach spaces
         with applications to parabolic equations, {\em J. Dynamics
        Differential Equations} {\bf 5} No. 2 (1993), 279--303.
\bibitem{sase} {\sc R.J. Sacker, G.R. Sell}, A spectral theory for
        linear differential systems, {\em J. Differential Equations}
               {\bf 27} (1978), 320--358.
\bibitem{shyi5} {\sc W. Shen, Y. Yi},
        Convergence in almost periodic Fisher and Kolmogorov models,
        {\em J. Math. Biology} {\bf 37} (1998), 84--102.
\bibitem{shyi} {\sc W. Shen, Y. Yi}, Almost Automorphic and Almost
        Periodic Dynamics in Skew-Product Semiflows, {\em Mem. Amer. Math. Soc.}
        {\bf 647}, Amer. Math. Soc., Providence  1998.
\bibitem{shne} {\sc Ya. Shneiberg}, Zeros of integrals along trajectories of
        ergodic systems, {\em Funktsional. Anal. i Prilozhen.} {\bf 19}
        (2) (1985), 92--93.
\bibitem{smit} {\sc H.L. Smith},
        {\em Monotone Dynamical Systems. An Introduction to the Theory
        of Competitive and Cooperative Systems},
        Amer. Math. Soc., Providence, 1995.
\bibitem{smth} {\sc H.L. Smith, H.R. Thieme}, {\em Dynamical Systems and Population Persistence}, Graduate Studies in Mathematics, {\bf 118}. Amer. Math. Soc., Providence, 2011.
\bibitem{wang}   {\sc L. Wang}, Almost periodic solution for Nicholson's blowflies model with patch structure and linear harvesting terms, {\em Appl. Math. Model.} \textbf{37} (2013),     2153--2165.
\bibitem{zhaox} {\sc X.-Q. Zhao},
        Global attractivity in monotone and subhomogeneous almost periodic systems,
        {\em J. Differential Equations\/} {\bf 187} (2003), 494--509.
\bibitem{zhaox2} {\sc X.-Q. Zhao},
        {\em Dynamical Systems in Population Biology},
        CMS Books in Mathematics, Springer-Verlag, New York, 2003.
\end{thebibliography}
\end{document}